\newtheorem{Definition}{Definition}
\newtheorem{Theorem}{Theorem}
\newtheorem{Remark}{Remark}
\newtheorem{Lemma}{Lemma}
\numberwithin{equation}{section}
\DeclareMathOperator{\Var}{Var}
\DeclareMathOperator{\Cov}{Cov}
\newcommand{\mean}[2]{\bar{#1}_{#2}}
\newcommand{\lgamma}{L_{\gamma}}
\newcommand{\lvar}{L_{v}}
\newcommand{\cgamma}{c_{\gamma}}
\newcommand{\cvar}{c_{v}}
\newcommand{\cf}{c_{f}}
\newcommand{\thetaest}{\hat{\theta}_n}
\newcommand{\muhat}{\hat{\mu}_{n, l}}
\newcommand{\xk}{x_{k, n}}
\newcommand{\yk}{y_{k, n}}
\newcommand{\sumBlockSquares}{S_{n, l}}
\newcommand{\sumBlockSquaresMod}{S_{n, l, b}}
\newcommand{\maxzero}[1]{#1_+}
\newcommand{\SDLRD}{$f$-reg-LRD}
\newcommand{\gammaRegLRD}{$\gamma$-reg-LRD}
\newcommand{\gammaIntLRD}{$\gamma$-sum-LRD}
\newcommand{\iesLRD}{IE-LRD}
\definecolor{evgeny}{rgb}{0.25, 0.41, 0.88}
\definecolor{albert}{rgb}{0,0,0}
\definecolor{marco}{rgb}{1,0.49,0}
\newcommand{\albert}{\textcolor{albert}}
\begin{document}

\articletype{RESEARCH ARTICLE}

\title{Detection of Long Range Dependence in the Time Domain for (In)Finite-Variance Time Series}

\author{
\name{Marco Oesting\textsuperscript{a}, Albert Rapp\textsuperscript{b}\thanks{CONTACT: Albert Rapp, albert.rapp@uni-ulm.de} and Evgeny Spodarev\textsuperscript{b}}
\affil{\textsuperscript{a}Stuttgart Center of Simulation Science, Institute for Stochastics and Applications, University of Stuttgart, Pfaffenwaldring 5a, 70569 Stuttgart, Germany; \\ \textsuperscript{b}Institute of Stochastics, Ulm University, Helmholtzstraße 16, 89069 Ulm, Germany}
}

\maketitle

\begin{abstract}
Empirical detection of long range dependence (LRD) of a time series often consists of deciding whether an estimate of the memory parameter $d$ corresponds to LRD. 
Surprisingly, the literature offers numerous spectral domain estimators for $d$ but there are only a few estimators in the time domain.
Moreover, the latter estimators are criticized for relying on visual inspection to determine an observation window $[n_1, n_2]$ for a linear regression to run on.
Theoretically motivated choices of $n_1$ and $n_2$ are often missing for many time series models. 

In this paper, we take the well-known variance plot estimator and provide rigorous asymptotic conditions on $[n_1, n_2]$ to ensure the estimator's consistency under LRD. 
We establish these conditions for a large class of square-integrable time series models.
This large class enables one to use the variance plot estimator to detect LRD for infinite-variance time series in the sense of indicators of excursion sets.
Thus, detection of LRD for infinite-variance time series is another novelty of our paper.
A simulation study analyzes the LRD detection performance of the variance plot estimator and compares it to a popular spectral domain estimator.
\end{abstract}

\begin{keywords}
Long range dependence; Time series; Fractional processes; Stationary stochastic processes; Linear regression
\end{keywords}

\section{Introduction}

\label{sec: introduction}

Long range dependence of a finite-variance stationary time series $X = \{ X(k), k \in \mathbb{Z} \}$ is often characterized by its spectral density $f_X(\lambda)$, $\lambda \in [-\pi, \pi]$.
In particular, a finite-variance stationary process $X$ with spectral density $f_X$ is said to exhibit LRD if
\begin{align*}
f_X(\lambda) = L_f\big(1 / \vert \lambda \vert\big) \vert \lambda \vert^{-2d},
\quad
\albert{\lambda \in [-\pi, \pi],}
\end{align*}
where $L_f(\lambda) \geq 0$ is a slowly varying function and $d \in (0, 0{.}5)$, cf.~Definition 1.2 in \cite{JanBeran.2016}.

Notice that we call a time series stationary if its finite-dimensional distributions are translation-invariant. 
Also, $d \in (-0{.}5,  0{.}5)$ is often referred to as the memory parameter. 
Depending on its value on $(- 0{.}5, 0]$ and on the behavior of $L_f$ near the origin, the standard literature speaks of $X$ as exhibiting short or intermediate range dependence (SRD or IRD) or antipersistence.
In this paper, we treat all cases that correspond to $d \leq 0$ as ``non-LRD''.

In practice, it is of critical importance for the quality of statistical inference whether a time series is LRD.
One effect of LRD is that the sample mean's variance $\Var(\mean{X}{n})$ does not behave as $n^{1/2}$ asymptotically (cf. \Cref{thm: variance-behaviour}) which can lead to e.g. erroneous confidence intervals or hypothesis tests.
For example, Table 1.1 in \cite{JanBeran.2016} depicts simulated rejection probabilities (under the null hypothesis) for the $t$-test at the nominal
5\%-level of significance based on 1000 simulations of a FARIMA$(0, d, 0)$ process with memory parameter $d$.
The findings of that simulation study are that the rejection probabilities rise and the test quality declines as the memory becomes stronger ($d$ larger).

Empirical detection of LRD of a given time series $X$ can be achieved by estimating the memory parameter $d$ and classifying $X$ as LRD when the estimate $\hat{d}$ is larger than zero.
{\color{albert}
In the literature, there are many estimation procedures for the long memory parameters.
Some of them have even been extended to non-stationary processes e.g.~the Whittle estimator \citep{Abadir.2007}. 
For an overview of estimators see \citet[Chapter 5]{JanBeran.2016} or \citet[Chapter 8]{Giraitis.2012}.

In this paper, we focus on two semi-parametric estimation approaches, namely the so-called GPH estimator and the variance plot estimator.
These are based on running an ordinary linear regression on a log-log scale.}
The quantities that are being regressed depend on the perspective one wants to take.
One popular perspective is taken by using a spectral domain approach.
In this case, one uses observations $X(1), \ldots, X(n)$ and regresses the empirical equivalent of $f_X$, i.e. the periodogram 
\begin{align}
\label{eq: peridogram}
I_{n, X}(\lambda) 
:= 
\frac{1}{2\pi n}
\bigg\vert
\sum_{k = 1}^{n} X(k) e^{-ik\lambda} 
\bigg\vert^2, 
\quad
\lambda \in (-\pi, \pi),
\end{align}
against $\lambda$ on a log-log scale.
Probably the most famous form of this procedure uses the first $w \in \mathbb{N}$ smallest Fourier frequencies $\lambda_j = 2 \pi j / n$ and regresses $\log I_{n, X}(\lambda_j)$ against $-2 \log(\lambda_j)$, $j = 1, \dots, w$, in order to estimate $d$ by the regression line slope using the classical least squares method.
This estimator was first proposed by \cite{Geweke.1983} and is thus known as the GPH estimator.
There are various modifications of the GPH estimator like tapered and pooled versions.
These modifications are outside the scope of this paper; we refer to the above-mentioned overview works for more information on them.

Naturally, it is tempting to think that running a linear regression (as in the GPH estimator) poses no problem because the classical least squares estimator is known to be consistent under fairly general conditions.
For example, let $Q_n:= \mathbb{E}{\delta_n\delta_n^T}$ describe the covariance matrix of a linear regression
$
Y_n = Z_n\beta + \delta_n,
$  
where $Z_n$ is a $n\times n$-matrix of deterministic predictor variables and $Y_n$ and $\delta_n$ are $n$-dimensional vectors of response variables and error terms, respectively.
Now, assuming that 
\begin{align}
0 
< 
\inf_{n \in \mathbb{N}} v_{\min}(Q_n)
<
\sup_{n \in \mathbb{N}} v_{\max}(Q_n)
< \infty,
\label{eq: assumption linear regression}
\end{align}
where $v_{\text{min}}(Q_n)$ and $v_{\text{max}}(Q_n)$ are the smallest and largest eigenvalues of $Q_n$, it is known by \citet[Thm.~3.1]{Drygas.1976} that the least squares estimator $\hat{\beta}$ of $\beta$ is consistent if and only if $v_{\text{min}}(Z_n^TZ_n) \rightarrow \infty$ as $n \rightarrow \infty$.
Unfortunately, if the error terms form an LRD random sequence, Theorem 3.3 from \cite{Bottcher.2007} tells us that assumption \eqref{eq: assumption linear regression} is not fulfilled. 
In fact, under LRD conditions, there are many open questions w.r.t. linear regression. To the best of our knowledge, the most comprehensive treatment of linear regression under LRD can be found in \citet[Chapter 11]{Giraitis.2012}. 
There, the authors consider the asymptotic behavior of multiple estimators of $\beta$ when the error terms are given by an LRD Gaussian or linear process. 
In the latter case, the linear process' innovations are usually assumed being independent and having finite second moments.
However, for the estimators that are considered in this paper, it is not clear if any of these conditions on the errors are met.

In the case of the GPH estimator, the regression errors are not asymptotically independent if $X$ is LRD, cf.~\cite[Section 4.6]{JanBeran.2016}.
This makes proving its consistency more complicated in the LRD case.
In fact, the GPH estimator's consistency was established in the original paper \cite{Geweke.1983} only when $X$ is not LRD. 
And it took a couple of years until \cite{Robinson.1995} established asymptotic normality of the GPH estimator under LRD assuming that $X$ is Gaussian.
\cite{Moulines.2003} slightly modified the GPH estimator and were able to lift the latter restriction such that $X$ can be a FARIMA$(p, d, q)$ process whose innovations form an iid sequence with finite fourth moments and a characteristic function that belongs to $\mathcal{L}^r$ for some $r \geq 1$.	
Note that we have used $\mathcal{L}^r$ to denote the space of real-valued functions such that the  $r$-th power of their absolute value is integrable.

Naturally, if there is a semi-parametric estimation procedure in the spectral domain, one expects the existence of a competing estimator in the time domain.
It could use a similar approach as the GPH estimator by replacing the role of the periodogram in a log-log-regression by some counterpart in the time domain.
However, there are only a few treatments of time domain estimators in the literature, e.g.~\cite{TAQQU1995}, \cite{Giraitis1999} or \cite{McElroy2007}.
And even though there are some theoretical results on time domain estimators, many of these estimators are commonly referred to as ``heuristic", cf. p.~416 in \cite{JanBeran.2016}.

In this paper, we consider the so-called variance plot estimator. It is a variance-based estimator in the time domain that estimates the long memory parameter $d$ by regressing $\log \widehat{\Var}(\mean{X}{n})$ against $\log n$, $n = n_1, \dots, n_2$ for suitable cut-off points $n_1, n_2 \in \mathbb{N}$, $n_1 < n_2$.
Here, $\widehat{\Var}(\mean{X}{n})$ is an estimate of $\Var(\mean{X}{n})$, i.e.\ the variance of the sample mean of length $n$.
These cut-off points $n_1$ and $n_2$ fulfill a similar role as the bandwidth parameter $w$ does for the GPH estimator.
In \Cref{THM: CONSISTENCY VAR PLOT}, we establish sufficient conditions on the choice of $n_1$ and $n_2$ such that the slope estimator of the least squares regression line is a consistent estimator of $d$ when the underlying time series is LRD.  
We argue that, this way, it is guaranteed that cut-off points can be chosen such that the variance-based estimator can be used for rigorous conclusions.
Therefore, this easily implemented estimator is more than ``heuristic".
More importantly, our consistency result is true for a large class of models.
This is a crucial advantage compared to other estimators and it is an advantage that we need for the detection of LRD for infinite-variance time series. 

Since we also consider infinite-variance time series, a different definition of LRD will be used in order to talk about the memory of such time series in a meaningful way.
In a simulation study, we compare our variance-based estimator with the modified GPH estimator as stated by \cite{Moulines.2003}.
More specifically, we evaluate how well classifiers based on these estimators are able to correctly classify simulated realizations of fractional Gaussian noise (fGN) and some infinite-variance subordinated fGNs.
Thus, an additional insight of our estimator is demonstrating how already established estimators for finite-variance time series can, in principle, be used to empirically detect LRD of infinite-variance processes.
This relies on transforming the original time series.
However, for most estimators (such as the GPH estimator) it is not clear whether the assumptions of existing limit theorems hold true for the transformed time series.
Due to the large model class for which we proved \Cref{THM: CONSISTENCY VAR PLOT}, our estimator does not have this problem.

The paper is structured as follows.
\Cref{sec: preliminaries} gives an overview of the different definitions of LRD we use throughout this paper.
Further, this section introduces both the GPH estimator and our variance-based estimator and defines (subordinated) fGNs that are used in the simulation study.
In \Cref{sec: consistency}, we prove that our variance-based estimator is consistent given a suitable choice of cut-off points $n_1$ and $n_2$.
In \Cref{sec: simulations}, the findings of the aforementioned simulation study are summarized.  
In \Cref{sec: discussion}, we put our result into the context of the existing literature. 
Also, we discuss open questions. 
In the appendix, Sections \ref{appendix: main result} and \ref{appendix: trafo okay proof} contain the proofs of Theorems \ref{THM: CONSISTENCY VAR PLOT} and \ref{THM: TRAFO IS OKAY}.

\section{Preliminaries}
\label{sec: preliminaries}

In this section, we give preliminary definitions and basic facts serving as the groundwork for the remaining sections.
In \Cref{sec: LRD}, we review multiple non-equivalent definitions of LRD that are used throughout the literature.
In \Cref{sec: fracNoise}, we introduce the (subordinated) fGN which we use in the simulation study in \Cref{sec: simulations}.
Also, ranges of the memory parameter are given for which a (subordinated) fGN is LRD (depending on the notion of LRD).
Finally, \Cref{sec: Estimators} defines the GPH- and the variance-based estimator from the introduction more formally.

\subsection{Long Range Dependence}
\label{sec: LRD}
Before we can define LRD, we will need the well-known notion of regularly varying functions.
\begin{Definition}
	\label{def: slowly varying fct}
	A measurable function $f : [0, \infty) \rightarrow \mathbb{R}$ is called \textit{regularly varying} with index $\alpha \in \mathbb{R}$ if $f$ is either eventually positive or eventually negative and for any $b > 0$ it holds that
	\begin{align*}
	\lim_{x \rightarrow \infty} \frac{f(bx)}{f(x)} = b^\alpha.
	\end{align*} 
	If $\alpha = 0$, $f$ is called \textit{slowly varying}.
\end{Definition}

To differentiate between various definitions of LRD, we will use prefixes in conjunction with the abbreviation LRD.
For instance, LRD as defined by the spectral density's behavior at zero will be referred to as \SDLRD.
Let us begin by recalling Def.~1.2 in \cite{JanBeran.2016}.

\begin{Definition}[\SDLRD{}]
	Let $X = \{ X(k), k \in \mathbb{Z} \}$ be a finite-variance time series with spectral density $f_X(\lambda) = (2\pi)^{-1}\sum_{k \in \mathbb{Z}} \gamma(k) \exp\{ -ik\lambda\}$, where $\gamma$ denotes the auto-covariance function of $X$. 
	Then, $X$ is said to exhibit \textit{\SDLRD{}} if
	\begin{align}
	\label{eq: SD-LRD}
	f_X(\lambda) = L_f\big(1 / \vert \lambda \vert\big) \vert \lambda \vert^{-2d}, \quad \lambda \in [-\pi, \pi],
	\end{align}
	where $d \in (0, 0{.}5)$ and $L_f(\lambda) \geq 0$ is slowly varying.
\end{Definition}

Taking a time-domain (instead of a spectral-domain) perspective leads to definitions of LRD in terms of the autocovariance function $\gamma(k) := \Cov(X(0), X(k))$, $k \in \mathbb{Z}$, of a time series $X$.
However, it is also common to consider not the asymptotic behavior of the autocovariance function $\gamma$ at $\infty$ but its summability.
Thus, two connected but different notions of LRD emerge which can be found in \citet[Eq. (2.1.5) and (2.1.6)]{Pipiras.2017}.

\begin{Definition}[\gammaRegLRD{} and \gammaIntLRD{}]
	Let $X = \{ X(k), k \in \mathbb{Z} \}$ be a finite-variance stationary time series with autocovariance function $\gamma$. 
	Then, $X$ is said to exhibit \textit{\gammaRegLRD{}} if $\gamma$ is regularly varying with exponent $2d -1$ where $d \in (0, 0{.}5)$, i.e.\
	\begin{align}
	\label{eq: gamma-reg-LRD}
	\gamma(k) = L_\gamma(\vert k \vert) \vert k \vert^{2d-1}, \quad k \in \mathbb{Z},
	\end{align}
	where $L_\gamma$ is slowly varying.
	Further, $X$ is said to exhibit \textit{\gammaIntLRD{}} if $\gamma$ is not absolutely summable, i.e.\
	\begin{align*}
	\sum_{k = -\infty}^{\infty} \vert \gamma(k) \vert = \infty.
	\end{align*}
\end{Definition}

All previously mentioned notions of LRD relied on the existence of the process' second moments.
In order to talk about LRD of infinite-variance processes, let us introduce one more notion of LRD.
This notion is based on indicators of excursions and was introduced in \cite{Kulik.2021}.

\begin{Definition}[\iesLRD{}]
	\label{def: iesLRD}
	Let $X = \{ X(k), k \in \mathbb{Z} \}$ be a stationary time series. 
	Then, $X$ is said to exhibit \textit{\iesLRD{}} if there exists a finite measure $\nu$ on $\mathbb{R}$ such that
	\begin{align}
	\label{eq:  iesLRD}
	\sum_{k \in \mathbb{Z}, k \neq 0} \int_{\mathbb{R}} \int_{\mathbb{R}}
	\Big\vert 
	\Cov \Big( \mathds{1}\{ X(0) > u \}, \mathds{1}\{ X(k) > v \} \Big)
	\Big\vert\ \,
	\nu(\mathrm{d}u) \, \nu(\mathrm{d}v) 
	=
	\infty.
	\end{align}
\end{Definition}

\begin{Remark}
	The finiteness of the left-hand side (LHS) of Equation \eqref{eq:  iesLRD} would guarantee the finiteness of \\ $\sum_{k \in \mathbb{Z}, k \neq 0} \int_{\mathbb{R}} \int_{\mathbb{R}}
	\Cov \big( \mathds{1}\{ X(0) > u \}, \mathds{1}\{ X(k) > v \} \big)\ \,
	\nu(\mathrm{d}u) \, \nu(\mathrm{d}v)$ which appears as the limiting variance in a central limit theorem for the $\nu$-averages of the volume of excursion sets.
	This is similar to the motivation of using \gammaIntLRD{} as definition of LRD.
\end{Remark}

Notice that the indicator function is bounded and the measure $\nu$ in \Cref{def: iesLRD} is finite on $\mathbb{R}$.
Therefore, the integral in equality \eqref{eq:  iesLRD} always exists.
Thus, the notion of \iesLRD{} is always defined.

\begin{Remark}
	\label{rem: Independence of marginals}
	\begin{enumerate}[label=(\Roman*)]
		\item Interestingly, \Cref{def: iesLRD} opens up a connection to copula theory which may be used for empirical investigations in the future.
		From Lemma 3.2 in \cite{Kulik.2021} it is known that a stationary time series $X$ whose univariate distribution is absolutely continuous is \iesLRD{} if there exists a finite measure $\nu_0$ on $[0, 1]$ such that
		\begin{align}
		\label{eq: iesLRD copula}
		\sum_{k \in \mathbb{Z}, k \neq 0} \int_{[0, 1]} \int_{[0, 1]}
		\big\vert 
		C_k(u, v) - uv 
		\big\vert \,	
		\nu_0(\mathrm{d}u) \, \nu_0(\mathrm{d}v) 
		=
		\infty,
		\end{align}
		where $C_k$, $k \in \mathbb{Z}$, is the unique copula of the bivariate random vector $(X(0), X(k))$, $k \in \mathbb{Z}$. 
		
		Also, if $\nu_0$ is the Lebesgue measure on $[0, 1]$, then 
		\begin{align*}
		\int_{[0, 1]} \int_{[0, 1]}
		\big\vert 
		C_k(u, v) - uv 
		\big\vert \,
		\mathrm{d}u \, \mathrm{d}v 
		=
		12 \sigma_{C, k},
		\end{align*}
		where $\sigma_{C, k}$ is Schweizer and Wolff's Sigma of $(X(0), X(k))$, cf.~\citet[Eq. 5.3.1]{Nelsen.2010}.
		Additionally, if $X$ is positively associated (cf.~Definition 1.2 in \citealp{Bulinskii.2007}), then $\sigma_{C, k}$  coincides with Spearman's Rho. 
		Possibly, this connection to Spearman's Rho and Schweizer and Wolff's Sigma allows for alternative approaches of detecting \iesLRD{} by empirical estimates of $\sigma_{C, k}$.
		
		\item \label{rem: monotonic trafo} Notice that the definition of \iesLRD{} is invariant under monotonic transformation. This means that given a monotonic transformation $\varphi$ and a time series $X = \{ X(k), k \in \mathbb{Z} \}$, the subordinated time series $Y$ defined by $Y(k) := \varphi(X(k))$, $k \in \mathbb{Z}$, is \iesLRD{} iff $X$ is \iesLRD{}.
		
		\item \albert{
			In practice, complications can arise when one tries to check e.g.~\gammaIntLRD{} using straightforward estimates of the autocovariance functions $\gamma$, cf.~\cite{Hassani.2012}.
			That is why LRD detection often uses intermediate results that arise from the respective LRD conditions.
			In the case of the variance-based estimator, these intermediate results are stated in \Cref{thm: variance-behaviour}.
		}
	\end{enumerate}
\end{Remark}

Further, let us connect the different notions of LRD.
\begin{Remark}
	\label{rem: LRD-Connections}
	\begin{enumerate}[label=(\Roman*)]
		\item Clearly, \gammaRegLRD{} implies \gammaIntLRD{} whereas the opposite is not true in general.
		
		\item If both the spectral density $f$ and the covariance function $\gamma$ exist and are regularly varying, then \SDLRD{} and \gammaRegLRD{} are equivalent. However,
		in general, regular variation of one of the two functions does not guarantee regular variation of the other one. Therefore, additional assumptions are often imposed.

		\item Theorem 6.2.11 and Remark 6.2.12 in \cite{Samorodnitsky.2016} give sufficient conditions under which \gammaRegLRD{} and \SDLRD{} are equivalent. If one assumes that $\gamma(n)$, $n > 0 $, is eventually non-increasing, \gammaRegLRD{} implies \SDLRD{}.
		The converse is true if the spectral density $f$ is of bounded variation on an interval $(a, \pi)$, where $a \in (0, \pi)$ and $f(\lambda)$, $\lambda > 0$, is non-increasing in a neighborhood of the origin.
		
		\item \albert{Since the autocovariance function $\gamma_X$ and the spectral density $f_X$ are Fourier transforms of each other, the relationship between \eqref{eq: SD-LRD} and \eqref{eq: gamma-reg-LRD} can be viewed in the context of Abelian and Tauberian theorems.
		These theorems are discussed e.g.~in Sections 4.3 and 4.10 of \cite{Bingham.1987}.}
		
		\item \label{rem: LRD-Connections pos. associated} If $\Cov \big( \mathds{1}\{ X(0) > u \}, \mathds{1}\{ X(k) > v \} \big)$ is either non-negative or non-positive for all $u, v \in \mathbb{R}$ and $k \in \mathbb{Z}$, the absolute value on the LHS of \eqref{eq:  iesLRD} can be omitted. This holds e.g. if $X$ is positively or negatively associated, cf.~Definition 1.2 in \cite{Bulinskii.2007}.
		In this case, due to Fubini's theorem, for any finite measure $\nu$, the integrals on the LHS of \eqref{eq:  iesLRD} can be rewritten as
		\begin{align*}
		&\int_{\mathbb{R}} \int_{\mathbb{R}}
		\Cov \Big( \mathds{1}\{ X(0) > u \}, \mathds{1}\{ X(k) > v \} \Big)\, \nu(\mathrm{d}u) \, \nu(\mathrm{d}v) \\
		&=
		\Cov
		\bigg( 
		\int_{\mathbb{R}} \mathds{1}\{ X(0) > u \}\, \nu(\mathrm{d}u),
		\int_{\mathbb{R}} \mathds{1}\{ X(k) > v \}\, \nu(\mathrm{d}v)
		\bigg) 
		=:
		\Cov(Y_\nu(0), Y_\nu(k)).
		\end{align*} 
		Therefore, such a time series $X$ is \iesLRD{} iff there exists a finite measure $\nu$ on $\mathbb{R}$ such that the transformed time series $Y_\nu(k) := \int_{\mathbb{R}} \mathds{1}\{ X(k) > u \}\, \nu(\mathrm{d}u)$ is \gammaIntLRD.		
	\end{enumerate}
\end{Remark}

Next, let us collect a few well-known results that will motivate our variance-based estimator.
Recall from \Cref{def: slowly varying fct} that a slowly varying function can be eventually negative. 
This ensures that, in the next theorem, the sign of $v(d)$ matches that of the slowly varying function $\lgamma$ such that the asymptotic behavior of the sample mean's variance is well-defined.

\begin{Theorem}
	\label{thm: variance-behaviour}
	Let $X = \{ X(k), k \in \mathbb{Z} \}$ be a finite-variance time series with autocovariance function $\gamma(k) = \lgamma(k) \vert k \vert^{2d - 1}$, $k \in \mathbb{Z}$, where $\lgamma$ is a slowly varying function at infinity and $d \in (-0{.}5, 0{.}5)$.
	Also, let $\mean{X}{n}$ denote the sample mean of $X(1), \ldots, X(n)$, $n \in \mathbb{N}$. 
	\begin{enumerate}[label=(\Roman*)]
		\item If either $d < 0$ and $\sum_{k \in \mathbb{Z}} \gamma(k) = 0$, or $d > 0$, there exists a slowly varying function $L_v$ such that $\Var(\mean{X}{n}) = L_v(n) n^{2d - 1}$ as $n \rightarrow \infty$.
		More precisely, $L_v(n) \sim v(d) L_\gamma(n)$ as $n \rightarrow \infty$ where $v(d) := 1 / (d (2d + 1))$.	
		\item If $d < 0$ and $\sum_{k \in \mathbb{Z}} \gamma(k) \in (0, \infty)$, then  $\Var(\mean{X}{n}) \sim  \Big( \sum_{k \in \mathbb{Z}} \gamma(k) \Big) n^{-1}$.
	\end{enumerate}
	
\end{Theorem}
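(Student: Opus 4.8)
The plan is to work directly from the exact identity
\begin{align*}
\Var(\mean{X}{n})
= \frac{1}{n^{2}} \sum_{i=1}^{n}\sum_{j=1}^{n} \gamma(i-j)
= \frac{\gamma(0)}{n} + \frac{2}{n^{2}} \sum_{k=1}^{n-1} (n-k)\, \gamma(k)
= \frac{1}{n} \sum_{|k| < n} \Big( 1 - \frac{|k|}{n} \Big) \gamma(k),
\end{align*}
which holds because $\gamma$ is symmetric and $\Cov(X(i),X(j)) = \gamma(i-j)$, and then reduce everything to the asymptotics of the partial sums $\sum_{k} \gamma(k)$ and $\sum_{k} k\,\gamma(k)$ (or their tails), for which I would invoke Karamata's theorem for regularly varying sequences (cf.\ \cite{Bingham.1987}). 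Since the paper's \Cref{def: slowly varying fct} allows a slowly varying function to be eventually negative, Karamata is applied to $\pm\lgamma$ according to its eventual sign, the sign being carried through unchanged. Part~(II) is the quick case: for $d < 0$ the exponent $2d-1 < -1$ forces $\sum_{k\in\mathbb{Z}}|\gamma(k)| < \infty$; in the last display the summands are dominated by $|\gamma(k)|$ and converge pointwise to $\gamma(k)$, so dominated convergence (with respect to counting measure) gives $n\,\Var(\mean{X}{n}) \to \sum_{k\in\mathbb{Z}}\gamma(k) \in (0,\infty)$, which is the assertion.

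For part~(I) I would treat the two sub-cases in parallel, using the common gauge $g_{n} := n^{2d+1}\lgamma(n)$ and aiming to show $n^{2}\Var(\mean{X}{n}) = \tfrac{1}{d(2d+1)}\, g_{n} + o(g_{n})$, equivalently $\Var(\mean{X}{n}) \sim v(d)\,\lgamma(n)\, n^{2d-1}$. If $d > 0$, then $2d-1 > -1$ and Karamata yields $\sum_{k=1}^{n}\gamma(k) \sim \lgamma(n) n^{2d}/(2d)$ and $\sum_{k=1}^{n} k\,\gamma(k) \sim \lgamma(n) n^{2d+1}/(2d+1)$; inserting these into $n^{2}\Var(\mean{X}{n}) = \gamma(0)\,n + 2n\sum_{k=1}^{n-1}\gamma(k) - 2\sum_{k=1}^{n-1}k\,\gamma(k)$ and using $\gamma(0)\,n = o(g_{n})$, the two surviving terms are $\tfrac{1}{d}g_{n}+o(g_{n})$ and $-\tfrac{2}{2d+1}g_{n}+o(g_{n})$, whose sum is $\big(\tfrac{1}{d}-\tfrac{2}{2d+1}\big)g_{n}+o(g_{n}) = \tfrac{1}{d(2d+1)}g_{n}+o(g_{n})$. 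If $d < 0$ and $\sum_{k\in\mathbb{Z}}\gamma(k)=0$, then $\gamma$ is again absolutely summable, $2d-1 < -1$, and I would write $\sum_{|k|<n}\gamma(k) = -2\sum_{k\geq n}\gamma(k)$ and use the tail version of Karamata, $\sum_{k\geq n}\gamma(k) \sim \lgamma(n) n^{2d}/(-2d)$, together with $\sum_{k=1}^{n-1}k\,\gamma(k) \sim \lgamma(n) n^{2d+1}/(2d+1)$ (valid since $2d > -1$); from $n^{2}\Var(\mean{X}{n}) = n\sum_{|k|<n}\gamma(k) - \sum_{|k|<n}|k|\,\gamma(k)$ one obtains the same combination $\big(\tfrac{1}{d}-\tfrac{2}{2d+1}\big)g_{n}+o(g_{n})$. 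In either sub-case, setting $\lvar(n) := \Var(\mean{X}{n})\, n^{1-2d}$ gives $\lvar(n) \sim v(d)\,\lgamma(n)$ with $v(d) = 1/(d(2d+1))$, so $\lvar$ is slowly varying, being asymptotically a constant multiple of the slowly varying function $\lgamma$.

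The step I expect to demand the most care is the bookkeeping in part~(I): there one subtracts two quantities of the same exact order $g_{n}$, so the pointwise ``$\sim$'' relations coming from Karamata must first be upgraded to the form ``$c\, g_{n} + o(g_{n})$'' before they may be combined, and one must verify that the leading coefficients do not cancel — they do not, since $\tfrac{1}{d}-\tfrac{2}{2d+1} = \tfrac{1}{d(2d+1)} \neq 0$ for every $d \in (-0{.}5,0{.}5)\setminus\{0\}$. A minor accompanying point is the sign: in the sub-case $d < 0$, $\sum_{k\in\mathbb{Z}}\gamma(k)=0$ the constraint $\Var(\mean{X}{n}) \geq 0$ together with $v(d) < 0$ forces $\lgamma$ to be eventually negative, which is exactly the situation anticipated in the remark preceding the theorem and which legitimizes applying Karamata's theorem to $-\lgamma$.
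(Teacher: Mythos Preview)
Your proposal is correct and essentially matches the paper's approach: part~(II) is identical to the paper's dominated-convergence argument, and for part~(I) the paper merely cites the proof of Corollary~1.2 in \cite{JanBeran.2016}, which carries out precisely the Karamata computation you spell out. Your explicit treatment of the cancellation bookkeeping and the sign of $\lgamma$ in the antipersistent sub-case is a welcome addition that the paper leaves to the reference.
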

\begin{proof}
	\begin{enumerate}[label=(\Roman*)]
		\item This follows from the first part of the proof of Corollary 1.2. in \cite{JanBeran.2016}. 
		\item Straightforward calculations and the use of dominated convergence yield
		\begin{align*}
		n \Var(\mean{X}{n}) 
		= 
		\gamma(0) + 2 \sum_{k = 1}^{n - 1} \bigg( 1 - \frac{k}{n} \bigg) \gamma(k)
		\rightarrow
		\gamma(0) + 2 \sum_{k = 1}^{\infty} \gamma(k) 
		= \sum_{k \in \mathbb{Z}} \gamma(k)
		\end{align*}
		as $n \rightarrow \infty$.
	\end{enumerate}
\end{proof}

\begin{Remark}
	\label{rem: behaviour of variance}
	\begin{enumerate}[label=(\Roman*)]
		\item The case $d = 0$ is a rather special case as it opens up a discussion on whether this value should correspond to ``LRD" or ``SRD" or even intermediate dependence. 
		Depending on the definition of LRD, this may yield different results.
		For example, even though the case $d = 0$ is not covered in \Cref{thm: variance-behaviour}, by monotonicity arguments the theorem suggests that the asymptotic behavior of $\Var(\mean{X}{n})$ should get arbitrarily close to $n^{-1}$ which may imply ``SRD" as $n^{-1}$ is the same asymptotic behavior as for independent random variables. Yet, $d = 0$ also implies that the covariances are not absolutely summable, i.e. \gammaIntLRD.
		Thus, for simplicity, \Cref{thm: variance-behaviour} ignores this case.
		
		\item The variance plot tries to estimate $2d - 1$ via the regression line slope of $\log (\Var(\mean{X}{n}))$ on $\log n$ for which \Cref{thm: variance-behaviour} delivers the theoretical foundation.
		Clearly, \Cref{thm: variance-behaviour} establishes that a slope of $-1$ is a threshold value to differentiate between \gammaRegLRD{} and not \gammaRegLRD. 
		
		\item Notice that if $d < 0$ and $\sum_{k \in \mathbb{Z}} \gamma(k) > 0$, then the variance of $\mean{X}{n}$ cannot decrease faster than $n^{-1}$. Consequently, a reliable estimation of $d$ from that variance is not possible in this case anymore.
		For the purpose of classification, however, the threshold value $-1$ remains a useful guide to distinguish between LRD and SRD.
		
		\item \label{item: cvar and cgamma} If $\lgamma(n) \rightarrow \cgamma$ as $n \rightarrow \infty$ where $\cgamma \neq 0$ is a constant that depends on $\gamma$ (and consequently $d$ as well), \Cref{thm: variance-behaviour} establishes that the slowly varying function $\lvar$ converges to a positive constant $\cvar$ as $n \rightarrow \infty$.
		
		\item \albert{
			Just like in \Cref{thm: variance-behaviour}, the case $d \in (-0{.}5,  0{.}5)$ is commonly investigated in the LRD literature.
			It is worth pointing out that there are time series with autocovariance function $\gamma(k) = \lgamma(k) \vert k \vert^{2d - 1}$, $k \in \mathbb{Z}$, where $d \in (-1, -0{.}5)$, cf.~\cite{Bondon.2007}.
			Hence, the choice $d \in (-0{.}5,  0{.}5)$ may be too restrictive in general.
			However, since we are mainly interested in $d > 0$, we omit a more detailed discussion and stick to the commonly used case $d \in (-0{.}5,  0{.}5)$.
		}
	\end{enumerate}
\end{Remark}
\subsection{Fractional Gaussian Noise and Subordinated Gaussian Processes}
\label{sec: fracNoise}

{\color{albert}
In the LRD literature, it is common to investigate time series of the form $X_t = \sum_{j = 0}^\infty a_j \varepsilon_{t - j}$ where $(\varepsilon_j)_{j \in \mathbb{Z}}$ is a sequence of innovations.
Specificically, fractionally integrated processes belong to this class of time series, cf.~Chapters 5 \& 6 in \cite{Hassler.2018} or Chapter 7 in \cite{Samorodnitsky.2016}.
However, as \cite{Hassler.Hosseinkouchack.2020b} and \cite{Hassler.Hosseinkouchack.2020a} pointed out, the harmonically weighted times series that uses $a_j = 1 / (1 + j)$ fulfills that $\Var(\mean{X}{n}) \sim C \frac{\log^2 n}{n}$  where $C > 0$ is a constant, but it is hard to empirically detect this time series as SRD.

To avoid such notoriously tricky time series for the comparison of our estimators in \Cref{sec: simulations}, we consider another well-understood class of time series from the LRD literature.
Namely, we consider the fractional Brownian motion and the fractional Gaussian noise.
More information on these processes beyond what we are going to introduce here can be found e.g.\ in Section 2.6 in \cite{Pipiras.2017}.
}
\begin{Definition}
	\label{def: lfsm}
	Suppose $H \in (0, 1)$.
	Further, define a kernel function $g_t$ by
	\begin{align*}
	g_t(H, x) =
	\maxzero{(t-x)}^{H - 1/2} 
	- 
	\maxzero{(-x)}^{H - 1/2},
	\quad
	x, t \in \mathbb{R},
	\end{align*}
	for $H \neq 1 /2$ and
	\begin{align*}
	g_t(1 / 2, x) = \begin{cases}
	\mathds{1}\big\{ x \in [0, t] \big\}, &t \geq 0, x \in \mathbb{R},\\
	\mathds{1}\big\{ x \in [t, 0] \big\}, &t < 0, x \in \mathbb{R},
	\end{cases} 
	\end{align*}
	where $\maxzero{x} = \max\{ x, 0 \}$.
	Then, the so-called \textit{fractional Brownian motion (fBM) with Hurst index $H$} is a stochastic process $X = \{ X(t), t \in \mathbb{R} \}$ such that
	\begin{align*}
	X(t) = \frac{\sigma^2}{C(H)}\int_{\mathbb{R}} g_t(H, x) \, \Lambda(\mathrm{d}x),
	\end{align*}
	where $\Lambda$ is a standard Gaussian random measure with Lebesgue control measure and $C^2(H) = \int_\mathbb{R} g_1^2(H, x)\, \mathrm{d}x$ is a normalizing constant such that $\Var(X(1)) = \sigma^2$. 
	Furthermore, the increment process $Y(k) = X(k) - X(k - 1)$, $k \in \mathbb{Z}$, is stationary and known as \textit{fractional Gaussian noise (fGN) with Hurst index $H$}.
\end{Definition}

\begin{Remark}
	\label{rem: lfsm}
	\begin{enumerate}[label=(\Roman*)]
		\item Note that it is common in the literature to refer to a fBM's memory despite its non-stationarity.
		What is usually meant is the memory of the corresponding stationary increment process, i.e.\ the corresponding fGN.
		
		\item The covariance function $\gamma_Y$ and spectral density $f_Y$ of a fGN $Y = \{ Y(k), k \in \mathbb{Z} \}$ are well-known. For instance, Proposition 2.8.1\ in \cite{Pipiras.2017} yields
		\begin{align}
		\gamma_Y(k) &= \frac{\sigma^2}{2}\Big( 
		\vert k + 1 \vert^{2H} +
		\vert k - 1 \vert^{2H} -
		2 \vert k \vert^{2H}
		\Big) , \quad k \in \mathbb{Z},
		\label{eq: covariance function fGN} \\
		f_Y(\lambda) &=  \frac{\sigma^2 \Gamma(2H + 1) \sin(H\pi)}{2\pi} 
		\vert 1 - e^{-i\lambda} \vert^2 
		\sum_{n = -\infty}^\infty 
		\vert \lambda + 2 \pi n \vert^{-1 -2H}, \quad \lambda \in (-\pi, \pi). 
		\label{eq: spectral density fGN} 
		\end{align}
		Additionally, for $H \neq \frac{1}{2}$, it holds that 
		\begin{align}
		\gamma_Y(k) &\sim \sigma^2 H (2H - 1) k^{2H - 2}
		\label{eq: asymp covariance}, \quad k \rightarrow \infty, \\
		f_Y(\lambda) &\sim \frac{\sigma^2 \Gamma(2H + 1) \sin(H\pi)}{2\pi} \vert \lambda \vert^{1 - 2H},
		\quad \lambda \rightarrow 0
		\label{eq: asymp spectral density}
		\end{align}
		Consequently, the fGN $Y$ is \gammaRegLRD{} and \SDLRD{} iff $H > 1/2$.
		
	\end{enumerate}
\end{Remark}

As the variance of a fGN $Y$ is finite by definition, the notions of \gammaRegLRD{} and \SDLRD{} apply to it naturally. 
For the sake of investigating LRD of infinite-variance processes, let us introduce \textit{subordinated Gaussian processes}.
These are defined as processes $Z$ that fulfill $Z(t) = G(Y(t))$ for all $t \in \mathbb{R}$ where $G$ is a measurable function and $Y$ is a Gaussian process. 

In our simulation study, we will consider
\begin{align}
Z(k) = e^{Y^2(k) /  (2 \alpha)}, \quad k \in \mathbb{Z}, \label{eq: subordinated Gaussian}
\end{align}
where $Y$ is a fGN with Hurst index $H$, variance $\sigma^2>0$ and $\alpha > 0$.
Clearly, this subordinated Gaussian process has an infinite variance iff $\alpha \leq 2 \sigma^2$.
Therefore, the notions of \gammaRegLRD{} and \SDLRD{} do not apply anymore. 
Thus, we will investigate this process' memory in terms of \iesLRD{}.

\begin{Theorem}
	Let $Y$ be a fGN with Hurst index $H$ and $Z(k) = e^{Y^2(k) /  (2 \alpha)}$, $\alpha > 0$, $k \in \mathbb{Z}$. Then, $Z = \{Z(k), k \in \mathbb{Z} \}$ is \iesLRD{} iff $H \geq \frac{3}{4}$.
\end{Theorem}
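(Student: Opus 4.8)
The plan is to translate the statement into a question about the covariances of indicators of the \emph{squared} fractional Gaussian noise, and to analyze those covariances by a Hermite expansion. Since $z\mapsto e^{z/(2\alpha)}$ is strictly increasing, $Z$ is a monotone transformation of $W:=\{Y^2(k)\}_{k\in\mathbb{Z}}$, so by the monotone invariance of \iesLRD{} recorded in \Cref{rem: Independence of marginals} it is enough to decide whether $W$ is \iesLRD. Write $Y(k)=\sigma\xi(k)$, where $\xi$ is a unit-variance fGN with correlation function $r(k)=\gamma_Y(k)/\sigma^2$, and let $\phi$ denote the standard normal density. For $u<0$ the indicator $\mathds{1}\{W(k)>u\}$ is almost surely $1$ and contributes nothing to \eqref{eq:  iesLRD}; for $u\geq 0$ one has $\{W(k)>u\}=\{|\xi(k)|>\sqrt u/\sigma\}$. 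Hence, for $u,v\geq 0$,
\[
\Cov\big(\mathds{1}\{W(0)>u\},\mathds{1}\{W(k)>v\}\big)=\Cov\big(h_a(\xi(0)),h_b(\xi(k))\big),\qquad a=\tfrac{\sqrt u}{\sigma},\ b=\tfrac{\sqrt v}{\sigma},
\]
where $h_c(x):=\mathds{1}\{|x|>c\}$.

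Next I would expand $h_c$ in (probabilist's) Hermite polynomials $H_q$: since $h_c$ is even, its centred version has Hermite rank at least $2$, only even indices occur, and the standard diagram/Hermite formula gives
\[
\Cov\big(h_a(\xi(0)),h_b(\xi(k))\big)=\sum_{q\geq 2,\ q\text{ even}}\frac{c_q(a)\,c_q(b)}{q!}\,r(k)^q,\qquad c_q(c)=\EE[h_c(\xi)H_q(\xi)],
\]
with in particular $c_2(c)=2c\,\phi(c)>0$ for $c>0$. From \eqref{eq: asymp covariance}, $r(k)\sim H(2H-1)|k|^{2H-2}$, so $r(k)^2\sim [H(2H-1)]^2|k|^{4H-4}$ and the elementary divergence criterion yields
\[
\sum_{k\in\mathbb{Z},\,k\neq 0} r(k)^2=\infty \iff 4H-4\geq -1 \iff H\geq \tfrac34
\]
(the sum being trivially finite when $H=1/2$, since then $r(k)=0$ for $k\neq 0$). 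This is the arithmetic source of the threshold $3/4$.

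For the \emph{only if} direction, when $H<3/4$, I would use that $|r(k)|\leq 1$ and that $h_c$ has Hermite rank $\geq 2$ to obtain, via Cauchy--Schwarz on the Hermite coefficients,
\[
\big|\Cov(h_a(\xi(0)),h_b(\xi(k)))\big|\leq r(k)^2\Big(\sum_{q\geq 2}\tfrac{c_q(a)^2}{q!}\Big)^{1/2}\Big(\sum_{q\geq 2}\tfrac{c_q(b)^2}{q!}\Big)^{1/2}\leq \tfrac14\, r(k)^2,
\]
the last inequality because $\sum_{q\geq 2}c_q(c)^2/q!=\Var(h_c(\xi))\leq 1/4$ (here $c_0(c)=\PP(|\xi|>c)$ and $c_1(c)=0$ by symmetry). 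Consequently, for \emph{every} finite measure $\nu$, the left-hand side of \eqref{eq:  iesLRD} for $W$ is at most $\tfrac14\,\nu(\mathbb{R})^2\sum_{k\neq 0}r(k)^2<\infty$, so $W$, and hence $Z$, is not \iesLRD. For the \emph{if} direction, when $H\geq 3/4$, I would simply take $\nu=\delta_{u_0}$ for a fixed $u_0>0$ and set $a_0=\sqrt{u_0}/\sigma>0$. Since only even Hermite indices appear and $r(k)^q\geq 0$ for even $q$, every summand $c_q(a_0)^2 r(k)^q/q!$ is nonnegative, so
\[
\big|\Cov(h_{a_0}(\xi(0)),h_{a_0}(\xi(k)))\big|=\Cov(h_{a_0}(\xi(0)),h_{a_0}(\xi(k)))\geq \frac{c_2(a_0)^2}{2}\,r(k)^2,
\]
and summing over $k\neq 0$ gives $\infty$ because $c_2(a_0)>0$; thus \eqref{eq:  iesLRD} holds for $W$ and hence for $Z$.

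The main obstacle I anticipate is making the Hermite estimates \emph{uniform in $u$ and $v$}, so that the \emph{only if} part genuinely applies to all finite $\nu$ — this is precisely what the uniform bound $\Var(h_c(\xi))\leq 1/4$ secures — together with ruling out cancellation in the self-covariance needed for the \emph{if} part, which is handled by the structural observation that $h_c$ has only even Hermite components, forcing all terms (with $\nu=\delta_{u_0}$) to be nonnegative. The remaining computations — the value $c_2(c)=2c\phi(c)$, the asymptotics of $r(k)$ from \eqref{eq: asymp covariance}, and the convergence of $\sum|k|^{4H-4}$ — are routine.
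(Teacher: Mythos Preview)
Your argument is correct. The paper's own proof is much shorter because it simply invokes Example~3.9 in \cite{Kulik.2021}, which already characterizes \iesLRD{} of $e^{Y^2/(2\alpha)}$ for stationary Gaussian $Y$ with $\gamma_Y(k)\sim|k|^{-\eta}$ as equivalent to $\eta\le 1/2$; the paper then reads off $\eta=2-2H$ from \eqref{eq: asymp covariance}. What you do instead is unpack that external result via a direct Hermite expansion of $h_c(x)=\mathds{1}\{|x|>c\}$: the evenness of $h_c$ forces Hermite rank~$2$ with only even indices, so the bivariate covariance is a nonnegative series in $r(k)^2, r(k)^4,\ldots$, yielding both the upper bound $\tfrac14 r(k)^2$ (via Cauchy--Schwarz and $\Var(h_c(\xi))\le 1/4$, which is exactly the uniformity in $u,v$ needed for the ``only if'' over all finite $\nu$) and the lower bound $\tfrac12 c_2(a_0)^2 r(k)^2$ with $c_2(c)=2c\phi(c)>0$. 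This is a genuinely different, more self-contained route: it makes transparent that the threshold $3/4$ is precisely the summability boundary of $r(k)^2$ arising from Hermite rank~$2$, whereas the paper outsources this mechanism to the cited example. The cost is length; the benefit is that your argument does not depend on the reader having access to \cite{Kulik.2021}.
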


\begin{proof}
	By \Cref{rem: Independence of marginals}\ref{rem: monotonic trafo}, we can assume  w.l.o.g.\ that $Y$ is a unit-variance fGN.
	Furthermore, Example 3.9 in \cite{Kulik.2021} states that for every stationary Gaussian process $Y$ with covariance function $\gamma_{Y}$ such that $\vert \gamma_{Y}(k) \vert \leq 1$ for all $k \in \mathbb{Z}$ and $\gamma_{Y}(k) \sim \vert k \vert^{-\eta}$, $\eta > 0$, as $k \rightarrow \infty$, it holds that $Z$ is \iesLRD{} iff $\eta \leq \frac{1}{2}$.
	Finally, asymptotic relation \eqref{eq: asymp covariance} gives us $\eta = 2 - 2H \leq 1 / 2$ iff $H \geq 3/4$. 
\end{proof}

\begin{Remark}
	Recall from  \Cref{rem: Independence of marginals}\ref{rem: monotonic trafo} that the notion of \iesLRD{} is invariant under monotonic transformations.
	Interestingly, the subordinated Gaussian process \eqref{eq: subordinated Gaussian} is a monotonic transformation of the second Hermite polynomial $H_2(X(k)) = X(k)^2 - 1$, $k \in \mathbb{Z}$. 
	But for a subordinated fGN $H_q(X)$ where $H_q$, $q \geq 2$, is the $q$-th Hermite polynomial, it is well-known that its normalized partial sums converge to a Gaussian random variable iff $H < \frac{2q - 1}{2q}$, cf.\ \cite{Taqqu.1975} and \cite{Breuer.1983} for the original convergence result and Theorem 4.1 in \cite{Nourdin.2009} for Berry-Esse\'en bounds. 
	
	Consequently, the memory threshold $H = \frac{3}{4}$ fits well into the existing theory and even has a connection to limit theorems with non-Gaussian limits dealing with another definition of LRD in the classical finite-variance literature. 
	Namely, Chapter 9 in \cite{Samorodnitsky.2016} reasons that LRD occurs after a phase transition in parameters of the model under which the limit of a statistic of interest significantly changes. 
	Of course, this notion is also applicable to infinite-variance processes.
	From this perspective, $H = \frac{3}{4}$ marks the point of phase transition from Gaussian to non-Gaussian limits in our case.
	
	{ \color{albert}
	Of course, this notion of phase transition depends on the statistic that is used to investigate long memory.
	For example, we know that the notion of \iesLRD{} considers long memory through excursion sets and is invariant under strictly monotonic transformations.
	Due to this invariance, we were able to consider the second Hermite polynomial $H_2(X)$ of our fGN $X$ in the above discussion.
	But other definitions of LRD may not be invariant under strictly monotonic transformation and one has to consider the subordinated Gaussian process as defined in \eqref{eq: subordinated Gaussian}.
	Using the findings from \cite{Sly.2008}, one can see that the properly normalized partial sum process of \eqref{eq: subordinated Gaussian} with $\alpha < 2$ converges to an $\alpha$-stable Lévy motion for $H < \frac{1}{2} + \frac{1}{2\alpha}$ or to the second Hermite process for $H > \frac{1}{2} + \frac{1}{2\alpha}$ .
	Clearly, this marks a different point of phase transistion and would correspond to long memory in some other sense.
	In the end, this different notion of long memory relates to a different statistic, namely the sample mean, that is used to investigate the memory. 
	}
\end{Remark}

Finally, let us show that the subordinated Gaussian process \eqref{eq: subordinated Gaussian} fulfills the conditions of \Cref{rem: LRD-Connections}\ref{rem: LRD-Connections pos. associated}.
\begin{Lemma}
	\label{thm: cov subordinated fGN}
	Let $Y$ be a stationary, zero mean Gaussian process.
	Define $Z$ via $Z(k) = e^{Y^2(k) /  (2 \alpha)}, k \in \mathbb{Z}$.
	Then it holds that $\Cov \big( \mathds{1}\{ Z(0) > u \}, \mathds{1}\{ Z(k) > v \} \big) \geq 0$ for all $k \in \mathbb{Z}$ and $u, v \in \mathbb{R}$.
\end{Lemma}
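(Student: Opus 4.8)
The plan is to reduce the claim to a positive–quadrant–dependence property of the pair $\bigl(\lvert Y(0)\rvert,\lvert Y(k)\rvert\bigr)$, a property that holds for \emph{any} bivariate centred Gaussian vector irrespective of the sign of its correlation. Fix $k \in \mathbb{Z}$ and $u,v \in \mathbb{R}$. Since $t \mapsto e^{t^2/(2\alpha)}$ is strictly increasing in $\lvert t\rvert$ with minimum value $1$ at $t=0$, for $j\in\{0,k\}$ the event $\{Z(j) > w\}$ equals the whole space when $w<1$, equals $\{Y(j)\neq 0\}$ when $w=1$, and equals $\{\lvert Y(j)\rvert > \sqrt{2\alpha\ln w}\,\}$ when $w>1$. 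Whenever $\{Z(0)>u\}$ or $\{Z(k)>v\}$ has probability $0$ or $1$ — in particular if $u<1$, if $v<1$, or if $\Var(Y(0))\Var(Y(k)) = 0$ — the asserted covariance is zero and nothing is to be shown. It remains to treat $\{Z(0)>u\} = \{\lvert U\rvert > a\}$ and $\{Z(k)>v\} = \{\lvert V\rvert > b\}$ with $a,b\ge 0$, where $(U,V) := (Y(0),Y(k))$ is centred Gaussian with strictly positive variances, which after rescaling (harmless, as $\{\lvert U\rvert > a\} = \{\lvert U/\sigma_U\rvert > a/\sigma_U\}$) may be assumed equal to $1$; set $\rho := \Corr(U,V)\in[-1,1]$. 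The goal is then the inequality $\PP(\lvert U\rvert > a,\ \lvert V\rvert > b) \ge \PP(\lvert U\rvert > a)\,\PP(\lvert V\rvert > b)$.

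The crucial step is a monotonicity property of the conditional tail $g(x) := \PP(\lvert V\rvert > b \mid U = x)$. For $\lvert\rho\rvert < 1$ one has $V\mid U=x \sim \mathcal{N}\bigl(\rho x,\,1-\rho^2\bigr)$, so with $\tau := \sqrt{1-\rho^2}$ and $\bar\Phi := 1-\Phi$,
\[
g(x) = \bar\Phi\!\Bigl(\tfrac{b-\rho x}{\tau}\Bigr) + \bar\Phi\!\Bigl(\tfrac{b+\rho x}{\tau}\Bigr) = G\bigl(\lvert\rho\rvert\,\lvert x\rvert\bigr), \qquad G(s) := \bar\Phi\!\Bigl(\tfrac{b-s}{\tau}\Bigr) + \bar\Phi\!\Bigl(\tfrac{b+s}{\tau}\Bigr).
\]
Since $G'(s) = \tau^{-1}\bigl[\phi\bigl(\tfrac{b-s}{\tau}\bigr) - \phi\bigl(\tfrac{b+s}{\tau}\bigr)\bigr] \ge 0$ for $s\ge 0$ (because $\lvert b-s\rvert \le b+s$ when $b\ge 0$ and $\phi$ is even and decreasing on $[0,\infty)$), the function $G$ is nondecreasing on $[0,\infty)$, hence $g$ is a nondecreasing function of $\lvert x\rvert$. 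The boundary cases are immediate: $g$ is constant when $\rho=0$, and $g(x) = \mathds{1}\{\lvert x\rvert > b\}$ when $\lvert\rho\rvert = 1$.

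To finish, condition on $U$: $\PP(\lvert U\rvert > a,\ \lvert V\rvert > b) = \EE\bigl[\mathds{1}\{\lvert U\rvert > a\}\,g(U)\bigr]$. Both $\mathds{1}\{\lvert U\rvert > a\}$ and $g(U)$ are nondecreasing functions of the single random variable $\lvert U\rvert$, so by the Chebyshev sum (association) inequality — $\EE[\varphi(W)\psi(W)] \ge \EE[\varphi(W)]\,\EE[\psi(W)]$ for nondecreasing $\varphi,\psi$, obtained by integrating $(\varphi(W)-\varphi(W'))(\psi(W)-\psi(W')) \ge 0$ against an independent copy $W'$ — the right-hand side is at least $\EE[\mathds{1}\{\lvert U\rvert > a\}]\,\EE[g(U)] = \PP(\lvert U\rvert > a)\,\PP(\lvert V\rvert > b)$, which is exactly the claim. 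I expect the monotonicity of $g$ in $\lvert x\rvert$ to be the only real (though elementary) obstacle; it is worth noting that the tempting Hermite-expansion shortcut does not work here, since the even Hermite coefficients of $x\mapsto \mathds{1}\{\lvert x\rvert > a\}$ are proportional to $H_{2m-1}(a)$, whose sign varies with $a$, so the conditioning argument above is genuinely needed.
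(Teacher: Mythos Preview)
Your proof is correct and takes a genuinely different route from the paper. The paper splits $\{\lvert Y(k)\rvert > \tilde v\}$ into $\{Y(k)>\tilde v\}\cup\{-Y(k)>\tilde v\}$, applies the Plackett/Price integral formula $\Cov(\mathds{1}\{U>u\},\mathds{1}\{V>v\}) = (2\pi)^{-1}\int_0^\rho (1-r^2)^{-1/2}\exp\{-(u^2-2ruv+v^2)/[2\sigma^2(1-r^2)]\}\,\mathrm{d}r$ to each piece, and then combines the two integrals so that the integrand factors as a nonnegative function times $\sinh\bigl(r\tilde u\tilde v/[\sigma^2(1-r^2)]\bigr)\ge 0$. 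Your argument instead establishes positive quadrant dependence of $(\lvert U\rvert,\lvert V\rvert)$ directly: you show that $x\mapsto \PP(\lvert V\rvert>b\mid U=x)$ is nondecreasing in $\lvert x\rvert$ by an elementary calculation on the conditional Gaussian law, and then invoke Chebyshev's association inequality for monotone functions of the single variable $\lvert U\rvert$. Your approach is more self-contained --- it does not rely on the specific integral representation of the bivariate normal orthant probability --- and it makes transparent the structural reason (association of $\lvert U\rvert$ and $\lvert V\rvert$) behind the inequality. The paper's approach, in exchange, yields an explicit integral expression for the covariance, which could be useful if one later needed quantitative bounds rather than just the sign.
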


\begin{proof}
	First, notice that for $u < 1$ or $v < 1$ it holds that $\Cov \big( \mathds{1}\{ Z(0) > u \}, \mathds{1}\{ Z(k) > v \} \big) = 0$.
	For $u, v \geq 1$, we use symmetry arguments to compute
	\begin{align}
	&\Cov \big( \mathds{1}\{ Z(0) > u \}, \mathds{1}\{ Z(k) > v \} \big) \notag \\
	&= 
	\Cov \big( \mathds{1}\{ \vert Y(0) \vert > \sqrt{2\alpha \log u} \}, \mathds{1}\{ \vert Y(k) \vert > \sqrt{2\alpha \log v} \} \big) \notag \\
	&=
	2 \Big(
	\Cov \big( \mathds{1}\{ Y(0) > \tilde{u} \}, \mathds{1}\{ Y(k) > \tilde{v} \} \big) 
	+ \Cov \big( \mathds{1}\{ Y(0) > \tilde{u} \}, \mathds{1}\{ -Y(k) > \tilde{v} \} \big)
	\Big),
	\label{eq: sum of covariance of inds}
	\end{align}
	where $\tilde{u} := \sqrt{2\alpha \log u}$, $\tilde{v} := \sqrt{2\alpha \log v} \geq 0$.
	A Gaussian random vector $(U, V)$ with zero means, variances $\sigma^2$ and correlation coefficient $\rho$ satisfies
	\begin{align*}
	\Cov \big( &\mathds{1}\{ U > u \}, \mathds{1}\{ V > v \} \big) 
	=
	\frac{1}{2\pi} \int_{0}^{\rho} 
	\frac{1}{\sqrt{1 - r^2}} \exp \bigg\{ -
	\frac{u^2 - 2ruv + v^2}{2\sigma^2(1 - r^2)}
	\bigg\}
	\, 
	\mathrm{d}r,
	\end{align*}
	cf. e.g.\ Lemma 2 in \cite{Bulinski.2012}.
	Consequently, the sum in \eqref{eq: sum of covariance of inds} equals
	\begin{align*}
	\frac{1}{2\pi} \int_{0}^{\vert \gamma_{Y}(k) / \gamma_{Y}(0) \vert}
	&\frac{1}{\sqrt{1 - r^2}} 
	\exp \bigg\{ -
	\frac{\tilde{u}^2 + \tilde{v}^2}{2\sigma^2(1 - r^2)}
	\bigg\} \\
	&\cdot
	\bigg(
	\exp \bigg\{
	\frac{r\tilde{u}\tilde{v}}{\sigma^2(1 - r^2)}
	\bigg\}
	-
	\exp \bigg\{ -
	\frac{r\tilde{u}\tilde{v}}{\sigma^2(1 - r^2)}
	\bigg\}
	\bigg)
	\mathrm{d}r.
	\end{align*}
	Finally, the claim follows from $e^x - e^{-x} = 2 \sinh(x) \geq 0$ for $x \geq 0$.
\end{proof}

\subsection{Memory Parameter Estimators}
\label{sec: Estimators}

Let us introduce the competing estimators.
We start with the so-called GPH estimator in \Cref{sec: GPH Estimator} as a popular estimation procedure.
Then, in \Cref{sec: variance plot} we introduce the variance plot estimator from the literature, its critique and offer improvements which lead to the consistency of the estimator.

\subsubsection{GPH Estimator}
\label{sec: GPH Estimator}

As was already mentioned, the GPH estimator can be thought of as a similar approach to the variance plot in the spectral domain (see \citealp[Chapter 5.6.2]{JanBeran.2016} for more information).
Assume that $X$ is a stationary, \SDLRD{} process with spectral density
\begin{align}
f_X(\lambda) \sim \cf \vert \lambda \vert ^{-2d},
\quad \lambda \rightarrow 0,
\label{eq: spectral density assumption}
\end{align}
where $d \in (0, 1/2)$ and $\cf \neq 0$.
Consequently, for $b(\lambda) := -2 \log \vert \lambda \vert$, it holds that
\begin{align}
\big\vert 
\log f_X(\lambda) 
- \big(\log c_f + d \cdot b(\lambda)\big) \big\vert
\rightarrow 0, \quad \lambda \rightarrow 0.
\label{eq: spectral density log asymptotic}
\end{align}

Again, the empirical counterpart of the spectral density is given by the periodogram which we defined in equation \eqref{eq: peridogram}.
Now, the asymptotic relation \eqref{eq: spectral density log asymptotic} motivates approximating $\log I_{n, X}(\lambda)$ through a linear regression of the form
$
\log I_{n, X}(\lambda) 
= 
\beta_0 + \beta_1 b(\lambda) + \delta,
$
where $\delta$ describes the approximation error.
Subsequently, using the $w$ smallest Fourier frequencies $\lambda_k = \frac{2 \pi k}{n}$, $k = 1, \dots, w$, the memory parameter $d$ can be estimated via the standard least squares slope estimator
\begin{align*}
\hat{d}_{\text{GPH}} = \frac{\sum_{k = 1}^{w} (b_k - \mean{b}{w}) \log I_{n, X}(\lambda_k)}{\sum_{k = 1}^{w} (b_k - \mean{b}{w})^2},
\end{align*}
where $b_k = -2 \log(\lambda_k)$ and $\mean{b}{w}$ describes the mean of $b_1, \dots, b_w$.
In this case, $w$ is called the bandwidth parameter and needs to be chosen such that $w \rightarrow \infty$  and $w / n \rightarrow 0$ as $n \rightarrow \infty$.
\albert{
As mentioned in the introduction, the original publication by \cite{Geweke.1983} did not show consistency of this estimator when the underlying process $X$ was in fact \SDLRD.
But under the additional assumption that the time series is Gaussian, \cite{Hurvich.1998} proved that the estimator is asymptotically normal under \SDLRD{}.
Furthermore, \cite{Robinson.1995} and \cite{Moulines.2003} proved  that for a refined version of the GPH estimator which also trims some low Fourier frequencies, asymptotic normality holds under \SDLRD{} even without the Gaussianity assumption.
}
More specifically, given an integer $0 < l < w$ one can  rewrite the least squares estimators of $d$ as
\begin{align*}
\hat{d}_{\text{GPH}}(l) := \frac{\sum_{k = 1}^{N} (b_{k, l} - \mean{b}{N, l}) \log I_{n, X}(\lambda_{k, l})}{\sum_{k = 1}^{N} (b_{k, l} - \mean{b}{N, l})^2},
\end{align*}
where $N = w - l + 1$, $\lambda_{k, l} = \frac{2\pi(l + k - 1)}{n}$ and $\mean{b}{N, l}$ is the mean of $b_{k, l} = -2\log(\lambda_{k, l})$, $k = 1,\ldots, N$. 

Moreover, it is known that under somewhat complex conditions on the behavior of the spectral density at zero it holds that 
$
\sqrt{w}
\big(
\hat{d}_{\text{GPH}}(l) - d
\big)
\stackrel{\text{d}}{\rightarrow}
\mathcal{N}(0, \pi^2 / 24).
$
Note that \cite{Robinson.1995} and \cite{Moulines.2003} also give asymptotic conditions on how the parameters $l$ and $w$ need to be chosen w.r.t.\ to the sample length. 
However, practical guidance for their choices is missing for finite sample size.

\subsubsection{Variance Plot}
\label{sec: variance plot}

\albert{In this section, let us introduce the variance-type estimator. 
It was first considered e.g.~in \cite{Teverovsky.1997} or \cite{Giraitis1999}.}
Let $X = \{ X(k), k = 1, \dots, n \}$ be a sample of a time series with covariance function $\gamma(n) = \lgamma(n) n^{2d-1}$, $d \in (-0{.}5, 0{.}5) \setminus \{0\}$, and $\lgamma$ being a slowly varying function which converges to a constant $\cgamma \neq 0$ as $n \rightarrow \infty$, i.e.
\begin{align}
\lgamma(n) \rightarrow \cgamma \neq 0
\label{eq: slowly varying convergence assumption}
\end{align}
as $n \rightarrow \infty$. Now, from \Cref{thm: variance-behaviour} it follows that
\begin{align}
\Var(\mean{X}{n}) = \lvar(n) n^{2D-1},
\label{eq: var consequence}
\end{align} 
where $\lvar$ is a slowly varying function that converges to a positive constant $\cvar$ as $n \rightarrow \infty$, $D = 0$ if $d <0$ and $\sum_{n \in \mathbb{Z}} \gamma(n) > 0$, and $D = d$ otherwise.

Similar to what was done for the GPH estimator, Equation \eqref{eq: var consequence} motivates
\begin{align}
\big\vert 
\log \Var(\mean{X}{n}) - \big(\log c_v + (2D - 1) \log n\big) 
\big\vert 
\rightarrow 0,
\quad
n \rightarrow \infty.
\end{align}
Thus, one can estimate $2D - 1$ by estimating the slope of a linear regression of $\log \Var(\mean{X}{n})$ on $\log n$.
To do this empirically,  one first estimates $\Var(\mean{X}{l})$ for $l = 1, \dots, n,$ by
\begin{align*}
S_l^2 := \frac{1}{n - l + 1} \sum_{k = 1}^{n - l + 1} (\mean{B}{k,l} - \muhat)^2,
\end{align*}
where $\mean{B}{k,l}$, $k = 1, \dots, n - l + 1$, denotes the mean of the block $(X(k), \dots, X(k + l - 1))$ and $\muhat$ is the sample mean of all block means $\mean{B}{k,l}$.
Then, $\theta = 2D - 1 \in (-2,0)$ is estimated as the regression slope  $\thetaest$ of $\log S_l^2$ on $\log l$ for $l = 1, \dots, n$ by least squares.

Further, the regression usually needs to be computed based on block lengths $l = n_1, \dots, n_2,$ where $n_1, n_2 \in \mathbb{N}$ with $n_1 < n_2$.
In summary, the slope estimator $\thetaest$ is written 
\begin{align}
\thetaest = \frac{\sum_{k = 1}^{N}(\xk - \mean{x}{N})(\yk - \mean{y}{N})}{\sum_{k = 1}^{N}(\xk - \mean{x}{N})^2}
\label{eq: slope estimator variance}
\end{align}
with $N = n_2 - n_1 + 1$, $\xk = \log(n_1 + k - 1)$, $\yk = \log S^2_{n_1 + k - 1}$, $k = 1, \dots, N$.
Also, $\mean{x}{N}$ and $\mean{y}{N}$ represent the means of $\xk$ and $\yk$, $k = 1, \dots, N$, respectively.
Then, the time series can be classified as \gammaRegLRD{} if $\hat{\theta}_n > -1$.

\begin{Remark}
	\begin{enumerate}[label=(\Roman*)]
		\item \albert{
		This variance-type estimator makes use of popular block resampling techniques, see e.g.~\cite{Kim.2011} or \cite{Zhang.2022}.
		This helps us to establish our main result, \Cref{THM: CONSISTENCY VAR PLOT}, by making use of proof techniques from \cite{Kim.2011}.
		}
				
		\item As mentioned before, the theoretically motivated choice of the observation window $[n_1, n_2]$ for the variance plot is still an open problem for many models. 
		In \Cref{THM: CONSISTENCY VAR PLOT}, we establish a valid asymptotic range of $n_1$ and $n_2$ for a large class of models.  
		As we will see, our choice leads to consistency of the slope estimator $\thetaest$.
		
		\item Notice that we do not use disjoint blocks $B_{k,l}$, $k = 1, \dots, n - l + 1$, to estimate the sample mean's variance.
		This is in contrast with the common procedure for the variance plot as, for instance, in \cite[Chapter 5.4.1]{JanBeran.2016}.
		Naturally, this deviation may feel unintuitive because one would expect disjoint blocks to have nicer properties since they may be thought of as ``closer to independence".
		However, overlapping blocks are more efficient in the sense of Eq.~(3.46) in \cite{Politis.1999}.
	\end{enumerate}
\end{Remark}

\section{Consistency under LRD}
\label{sec: consistency}

Let us state the main result of this paper.
It is valid for finite-variance time series and its proof can be found \Cref{appendix: main result}.
For the infinite-variance case we will perform a suitable transformation such that we can investigate LRD in terms of \Cref{def: iesLRD}.

Also, let us point out that our result will only consider non-deterministic time series. 
Heuristically speaking, a time series $X$ is non-deterministic if its values $X(t)$ at time points $t \in \mathbb{Z}$ are \textit{not} perfectly linear predictable by the observed past $X(s)$, $s \leq t$.
For a formal definition of non-deterministic time series let us refer to e.g. Chapter 5.7 in \cite{Brockwell.1991} and \Cref{def: determistic} below.
Notice that the notion of non-deterministic time series is often used in the context of weakly stationary time series. 
These are time series whose mean function is constant and whose autocovariance function depends only on the temporal lag.

\begin{Definition} \label{def: determistic}
	Let $X = \{ X(k), k \in \mathbb{Z} \}$ be a weakly stationary time series. Further, define $\mathcal{M}_n(X) = \overline{\text{span}} \{X(k),\ k \leq n \}$, the closure of the linear subspace spanned by $X$ up to time point $n$. Then, $X$ is said to be \textit{deterministic} if $X(n + j)$, $j \in \mathbb{N}$, is perfectly predictable in terms of elements of $\mathcal{M}_n$. This is equivalent to $\mathbb{E}\big[ \vert X(n+1) - P_{\mathcal{M}_n} X(n + 1) \vert^2\big] = 0$ where $P_{\mathcal{M}_n} X(n + 1)$ denotes the projection of $X(n + 1)$ onto $\mathcal{M}_n=\mathcal{M}_n(X)$.
	We call $X$ \textit{non-deterministic} if $X$ is not deterministic.
\end{Definition}

\begin{Theorem}
	\label{THM: CONSISTENCY VAR PLOT}
	Let $X = \{ X(k), k \in \mathbb{Z} \}$ be a stationary, non-deterministic time series whose spectral density exists.
	Further, assume that the autocovariance function $\gamma(k) = \lgamma(k) \vert k \vert^{2d-1}$, $d \neq 0$, of $X$ fulfils condition \eqref{eq: slowly varying convergence assumption}.
	Also, let $\theta \in (-2, 0)$ be the index of regular variation of $\Var(\mean{X}{n}) =  \lvar(n) n^{\theta}$ that arises in \Cref{thm: variance-behaviour}.	
	Then, for $n_1 = n^{\delta}$ and $n_2 = m n_1$ where  $0 < \delta < \min \Big\{ \frac{2 \vert \theta \vert}{4 \vert \theta \vert + 1} ,  \frac{\vert \theta \vert}{\vert \theta \vert + (|\theta|-1)_+ + 1}\Big\}$ and $m > 1$ it holds that $\vert \thetaest - \theta \vert \stackrel{\text{P}}{\rightarrow} 0$ as $n \rightarrow \infty$ with $\thetaest$ as given in Equation \eqref{eq: slope estimator variance}.
\end{Theorem}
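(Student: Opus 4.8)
The plan is to analyze the explicit slope formula \eqref{eq: slope estimator variance} by splitting the response $\yk = \log S^2_{n_1 + k - 1}$ into its deterministic part $\log \Var(\mean{X}{n_1 + k - 1})$ plus a stochastic error, and then to show that the deterministic part already produces a slope converging to $\theta$ while the stochastic error contributes nothing in the limit. Concretely, write $\thetaest = \theta + A_n + B_n$, where $A_n$ is the bias coming from replacing $\log \Var(\mean{X}{l})$ by $\log c_v + \theta \log l$ (i.e.\ the slowly varying remainder $\log L_v(l) - \log c_v$), and $B_n$ is the term driven by $R_{k} := \log S^2_{n_1 + k - 1} - \log \Var(\mean{X}{n_1+k-1})$. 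For $A_n$, since $L_v(l) \to c_v > 0$ by \Cref{thm: variance-behaviour} and \Cref{rem: behaviour of variance}\ref{item: cvar and cgamma} under assumption \eqref{eq: slowly varying convergence assumption}, the quantity $\log L_v(l) - \log c_v \to 0$ uniformly over $l \in [n_1, n_2]$ as $n_1 \to \infty$; feeding this into the ratio $\sum_k (\xk - \mean{x}{N})(\text{err}_k) / \sum_k (\xk - \mean{x}{N})^2$ and using that $\xk = \log(n_1 + k - 1)$ ranges over an interval of fixed multiplicative width $\log m$ (so the denominator is bounded below by a positive constant times $N$) gives $A_n \to 0$ deterministically. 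The substance of the proof is therefore the control of $B_n$.

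For $B_n$, the key is a moment bound on the block-variance estimator error $S^2_l - \Var(\mean{X}{l})$, uniformly in $l \in [n_1, n_2]$. Here I would import the proof technique of \cite{Kim.2011} for overlapping-block subsampling variance estimators: $S^2_l = \frac{1}{n-l+1}\sum_{k=1}^{n-l+1}(\mean{B}{k,l} - \muhat)^2$ is, up to the centering by $\muhat$ instead of $\EE X(0)$, an average of $(n-l+1)$ overlapping and hence weakly dependent squared block means, each with mean $\Var(\mean{X}{l})$ (plus a negligible correction since $X$ is centered WLOG). One shows $\EE\big[(S^2_l - \Var(\mean{X}{l}))^2\big] \le C\, l \,\Var(\mean{X}{l})^2 / (n - l + 1)$ or a comparable bound — the overlap among the $n-l+1$ blocks means the effective number of independent summands is of order $(n-l+1)/l$, and the summands have variance of order $\Var(\mean{X}{l})^2$ by Gaussian-type fourth-moment / Isserlis reasoning applied via the spectral density (here the existence of $f_X$ and non-determinism enter, ensuring $\Var(\mean{X}{l}) > 0$ for all $l$ so that the logarithm and the normalization are legitimate). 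Dividing by $\Var(\mean{X}{l})^2 \asymp L_v(l)^2 l^{2\theta}$, the relative error satisfies
\begin{align*}
\EE\Big[\Big(\tfrac{S^2_l}{\Var(\mean{X}{l})} - 1\Big)^2\Big] \le \frac{C\, l}{n - l + 1} \le \frac{C\, n_2}{n - n_2 + 1} = \frac{C\, m\, n^{\delta}}{n - m n^{\delta} + 1} \longrightarrow 0
\end{align*}
whenever $\delta < 1$, which holds under the stated bound on $\delta$. A Taylor expansion of $\log(1 + x)$ around $x = 0$ (the relative error being small with high probability) then transfers this to $\EE[R_k^2] \to 0$ uniformly in $k$, and an application of the Cauchy–Schwarz inequality to the numerator of $B_n$ together with the lower bound on its denominator gives $\EE[|B_n|] \to 0$, hence $B_n \xrightarrow{P} 0$. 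Combining, $\thetaest \xrightarrow{P} \theta$.

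The main obstacle is the uniform-in-$l$ fourth-moment bound on the overlapping-block variance estimator and the precise bookkeeping of how the centering by $\muhat$ (rather than the true mean) and the overlap structure interact — this is exactly where the restrictive upper bound on $\delta$, namely $\delta < \min\{2|\theta|/(4|\theta|+1),\ |\theta|/(|\theta| + (|\theta|-1)_+ + 1)\}$, must come from: the two competing constraints arise from (i) the relative-error bound above needing not just $l/(n-l+1) \to 0$ but a rate strong enough to survive the $\log$-linearization and the division by the $O(N)$-sized regression denominator (where $N = n_2 - n_1 + 1 \asymp n^\delta$ is itself growing only slowly), and (ii) a separate bound needed when $|\theta| > 1$ (the $(|\theta|-1)_+$ term), reflecting that for strongly dependent series the block means have heavier-tailed fluctuations and the crude fourth-moment estimate degrades. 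Making these two thresholds sharp, and verifying that the chosen $\delta$-window is nonempty, is the delicate accounting step; everything else (the deterministic bias $A_n$, the denominator lower bound, the $\log$-linearization) is routine.
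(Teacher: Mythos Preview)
Your overall architecture---decompose $\thetaest - \theta$ into a deterministic bias $A_n$ from $\log L_v(l) - \log c_v$ plus a stochastic term $B_n$, then control each---matches the paper in spirit, and the handling of $A_n$ and of the regression denominator is fine. But the heart of your argument for $B_n$ has a genuine gap.

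You propose the moment bound $\EE\big[(S_l^2 - \Var(\mean{X}{l}))^2\big] \le C\, l\,\Var(\mean{X}{l})^2/(n-l+1)$ via ``Gaussian-type fourth-moment / Isserlis reasoning applied via the spectral density''. The theorem does \emph{not} assume Gaussianity, nor any moment beyond the second; under the stated hypotheses $\Var(S_l^2)$ need not even be finite, so this bound is unavailable. Relatedly, your Taylor step (``relative error small with high probability, hence $\EE[R_k^2] \to 0$'') conflates convergence in probability with $L^2$ convergence and then feeds the latter into Cauchy--Schwarz; without integrability this fails. This is exactly the point where the paper's proof does something you are missing: it first invokes the Wold decomposition $X(k) = \sum_{j\ge 0} a_j \varepsilon_{k-j}$ (this is where non-determinism and existence of the spectral density actually enter, via $\int_{-\pi}^\pi \log f > -\infty$, not merely to ensure $\Var(\mean{X}{l})>0$), and then \emph{truncates the innovations} $\varepsilon_j$ at level $b$ to produce a modified process $X_b$ with bounded summands. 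For the truncated process one has all moments, so a Chebyshev/variance bound on $\sumBlockSquaresMod$ is legitimate; a separate first-moment estimate from \cite{Kim.2011} controls $\EE|\sumBlockSquares - \sumBlockSquaresMod|$ in terms of $\sqrt{\EE[\varepsilon_0^2 \mathds{1}\{|\varepsilon_0|>b\}]}$, which vanishes by sending $b = b_n \to \infty$ suitably.

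Your identification of the two $\delta$-thresholds is also off. The constraint $\delta < |\theta|/(|\theta| + (|\theta|-1)_+ + 1)$ does not come from ``heavier-tailed fluctuations for $|\theta|>1$''; it comes from bounding the centering correction $l^{|\theta|}\hat\mu_{n,l}^2$ via Markov (and then summing over $l \in [n_1,n_2]$ after a union bound). The constraint $\delta < 2|\theta|/(4|\theta|+1)$ comes from the covariance term in the variance of the truncated block-sum $\sumBlockSquaresMod$, where an auxiliary parameter $\xi_n$ must be tuned so that both $(n_2 - n_1)\xi_n \to 0$ and a term of order $(n_2^{|\theta|}/(n\xi_n - n_2))^{2|\theta|}(n_2 - n_1)$ vanishes. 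Neither threshold arises from your bound $l/(n-l+1)$, which indeed only needs $\delta<1$ and is therefore too weak to explain the stated window.
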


\begin{Remark}
	\label{rem: after main theorem}
	\begin{enumerate}[label=(\Roman*)]
		\item Notice that \Cref{THM: CONSISTENCY VAR PLOT} assumes that the cutoff parameter $\delta$ needs to be chosen dependent on $\theta$. 
		Consequently, it is still an open question as to how $m$ and $\delta$ can be "optimally" chosen in some sense.
		Nevertheless, \Cref{THM: CONSISTENCY VAR PLOT} proves the existence of lower and upper cutoff bounds such that the estimator is consistent.
		Similar statements are true for the GPH estimator, cf. Remark \ref{rem: after main theorem}\ref{rem: parameter choice for GPH}.
		In \Cref{sec: simulations}, we will see that, in practice, multiple choices of cutoff bounds can lead to classification results that are comparable to the GPH estimator.
		
		\item \label{rem: parameter choice for GPH}In the simulation study in \Cref{sec: simulations}, we will compare classification results of our estimator and the GPH estimator. 
		Thus, let us mention that an optimal choice of the GPH estimator's cutoff parameter $l$ and $w$ is not obvious either.
		There is a result that gives an optimal choice for $w$ when $l = 0$ is assumed, cf.~\citet[Section 5.6.4.5]{JanBeran.2016}. However, this optimal choice depends on unknown parameters as well. Some suggested alternatives exist in the literature, e.g. \cite{Hurvich1994} and \cite{Hurvich1999}. Since we are only interested in comparability of our estimator, we will simply run a grid search and test all cutoffs. 
		
		\item A sufficient condition for the existence of the spectral density of $X$ in \Cref{THM: CONSISTENCY VAR PLOT} is given by Theorem 6.2.11.~and Remark 6.2.12.~in \cite{Samorodnitsky.2016}. These results state that the spectral density exists if the autocovariance function is eventually non-increasing.
		
		\item A stationary time series with spectral density $f$ is non-deterministic iff $\int_{-\pi}^{\pi} \log f(x)\, \mathrm{d}x > - \infty$, cf.~Section §5.8 in \cite{Brockwell.1991}. 
		Combining this with \eqref{eq: spectral density fGN}  and \eqref{eq: asymp spectral density}, it is easy to show that the fGN is non-deterministic.
		Thus, it fulfills the conditions of \Cref{THM: CONSISTENCY VAR PLOT}.
	\end{enumerate}
\end{Remark}

In the next section, we will apply this estimator to both finite- and infinite-variance time series.
In the latter case, we will consider subordinated Gaussian time series $Z$ as described in \eqref{eq: subordinated Gaussian}.
However, we need to transform $Z$ as described in \Cref{rem: LRD-Connections}\ref{rem: LRD-Connections pos. associated} in order to apply our estimator. 
Thus, we will compute
\begin{align}
\label{eq: non-linear transform}
Z_\nu(k) := \int_{\mathbb{R}} \mathds{1}\{ Z(k) > u \}\, \nu(\mathrm{d}u), \quad k \in \mathbb{Z},
\end{align}
where $\nu$ is a finite measure on $\mathbb{R}$. 
More precisely, we will chose $\nu = \sum_{j = 1}^J w_j \delta_{u_j}$, where $J \in \mathbb{N}$, $w_j > 0$, $j = 1, \ldots, J$, and $\delta_u$ describes the Dirac measure concentrated at $u \in \mathbb{R}$. 

Notice that \eqref{eq: non-linear transform} transforms $Z$ non-linearly. 
Even if the underlying time series $Z$ fulfills the assumptions of \Cref{THM: CONSISTENCY VAR PLOT}, it is not clear whether the same holds true for the transformed time series $Z_\nu$. 
However, the next theorem shows that $Z$ can be safely transformed as described in \eqref{eq: non-linear transform}.
The corresponding proof can be found in \Cref{appendix: trafo okay proof}.

\begin{Theorem}
	\label{THM: TRAFO IS OKAY}
	Let $Y = \{Y(k), k \in \mathbb{Z}\}$ be a stationary, non-deterministic Gaussian time series whose spectral density exists. 
	Also, assume that the autocovariance function $\gamma_Y(k) = \lgamma(k) \vert k \vert^{2d-1}$, $d \neq 0$, of $Y$ fulfils condition \eqref{eq: slowly varying convergence assumption}. 
	Further, define a subordinated Gaussian time series $Z(k) = g(Y(k))$, $k \in \mathbb{Z}$, via an even, continuous function $g: \mathbb{R} \rightarrow \mathbb{R}$ which is strictly monotonically increasing on $[0, \infty)$. \\
	For a discrete measure $\nu = \sum_{j = 1}^{J} w_j \delta_{u_j}$ where $J \in \mathbb{N}, w_j > 0$, $u_j \in \mathbb{R}$ and the time series $Z_\nu = \{ Z_\nu(k), k \in \mathbb{Z}\}$ given by \eqref{eq: non-linear transform}, it holds that $Z_\nu$ is a stationary, non-deterministic time series whose spectral density exists. 
	Furthermore, the autocovariance function of $Z_\nu$ is regularly varying and fulfills condition \eqref{eq: slowly varying convergence assumption}.
\end{Theorem}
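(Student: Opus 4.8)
The plan is to reduce $Z_\nu$ to a finite linear combination of indicators $\mathds{1}\{|Y(k)|>a\}$, expand it in Hermite polynomials, and then read off all four assertions from the Hermite coefficients together with standard spectral facts. First I would normalise: replacing $Y$ by $Y/\sqrt{\gamma_Y(0)}$ and $g$ by $x\mapsto g(\sqrt{\gamma_Y(0)}\,x)$ leaves $Z$, hence $Z_\nu$, unchanged while making $Y$ unit-variance, so assume $\gamma_Y(0)=1$ and write $\rho=\gamma_Y$ for the correlation. Since $g$ is even, $Z(k)=g(|Y(k)|)$, and since $g$ is continuous and strictly increasing on $[0,\infty)$ it maps $[0,\infty)$ bijectively onto $[g(0),g(\infty))$. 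Hence, up to a null set, each event $\{Z(k)>u_j\}$ is either all of $\Omega$ (when $u_j\le g(0)$), empty (when $u_j\ge g(\infty)$), or $\{|Y(k)|>a_j\}$ with $a_j:=g^{-1}(u_j)>0$. Discarding the trivial terms, which only add an a.s.\ constant, we may assume $Z_\nu(k)=\sum_{j=1}^{J'}w_j\,\mathds{1}\{|Y(k)|>a_j\}$ with $a_j>0$, $w_j>0$, where $J'\ge 1$ as long as $Z_\nu$ is non-degenerate (otherwise $Z_\nu$ is a.s.\ constant and has no spectral density, so this is the only meaningful case). Stationarity of $Z_\nu$ is then immediate, as $Z_\nu(k)$ is a fixed measurable function of $Y(k)$.

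Next I would set up the Hermite expansion. Let $\phi$ be the standard normal density and $H_q$ the Hermite polynomials with $\mathbb{E}[H_p(N)H_q(N)]=q!\,\delta_{pq}$ for $N\sim\mathcal{N}(0,1)$. The centred function $h(x):=\sum_{j}w_j\big(\mathds{1}\{|x|>a_j\}-\mathbb{P}(|Y(0)|>a_j)\big)$, for which $Z_\nu(k)-\mathbb{E}[Z_\nu(0)]=h(Y(k))$, is bounded and \emph{even}, so $h=\sum_{q\ge 1}\frac{d_{2q}}{(2q)!}H_{2q}$ in $L^2(\phi)$ with $d_{2q}=\mathbb{E}[h(N)H_{2q}(N)]$; a short integration by parts gives $d_2=\sum_j w_j\,\mathbb{E}\big[\mathds{1}\{|N|>a_j\}(N^2-1)\big]=2\sum_j w_j a_j\phi(a_j)>0$. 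The Mehler/diagram formula for subordinated Gaussian sequences then yields
\begin{align*}
\gamma_{Z_\nu}(k)=\Cov\big(h(Y(0)),h(Y(k))\big)=\sum_{q\ge 1}\frac{d_{2q}^2}{(2q)!}\,\rho(k)^{2q}.
\end{align*}
Since $\rho(k)=\gamma_Y(k)\to 0$ and $\sum_{q}d_{2q}^2/(2q)!=\Var(Z_\nu(0))<\infty$, dominated convergence gives $\gamma_{Z_\nu}(k)\sim\frac{d_2^2}{2}\gamma_Y(k)^2=\frac{d_2^2}{2}\lgamma(k)^2\,|k|^{2(2d-1)}$ as $k\to\infty$. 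Thus $\gamma_{Z_\nu}$ is regularly varying of index $2(2d-1)=2\tilde d-1$ with $\tilde d:=2d-\frac12$, and the associated slowly varying function converges to $\frac{d_2^2}{2}\cgamma^2\neq 0$, i.e.\ $Z_\nu$ satisfies \eqref{eq: slowly varying convergence assumption} (with $\tilde d\neq 0$ unless $d=\frac14$).

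For the spectral density I would use that each power $\rho(\cdot)^m$ is the autocovariance sequence of a unit-variance process with spectral density $f_Y^{*m}$, the $m$-fold $2\pi$-periodic convolution of $f_Y$: indeed $f_Y^{*m}\ge 0$, $f_Y^{*m}\in L^1(-\pi,\pi)$, $\int_{-\pi}^{\pi}f_Y^{*m}=\big(\int_{-\pi}^{\pi}f_Y\big)^m=1$, and its Fourier coefficients are $\gamma_Y(k)^m=\rho(k)^m$. As the coefficients $d_{2q}^2/(2q)!$ are summable, the series $f_{Z_\nu}:=\sum_{q\ge 1}\frac{d_{2q}^2}{(2q)!}f_Y^{*2q}$ converges in $L^1$, is non-negative, and has Fourier coefficients $(\gamma_{Z_\nu}(k))_{k\in\mathbb{Z}}$; by uniqueness of the spectral measure it is the spectral density of $Z_\nu$, which therefore exists. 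Finally, $f_{Z_\nu}\ge\frac{d_2^2}{2}f_Y^{*2}$ pointwise, and for probability densities $f,g$ on the circle Jensen's inequality applied to the concave function $\log$ gives $\log(f*g)(\lambda)\ge\int_{-\pi}^{\pi}\log f(\lambda-\mu)\,g(\mu)\,\mathrm{d}\mu$; integrating in $\lambda$ and using translation invariance, $\int_{-\pi}^{\pi}\log(f*g)\ge\int_{-\pi}^{\pi}\log f$. Hence $\int_{-\pi}^{\pi}\log f_Y^{*2}\ge\int_{-\pi}^{\pi}\log f_Y>-\infty$, the last bound holding because $Y$ is non-deterministic (Szegő's criterion, recalled in \Cref{rem: after main theorem}), so $\int_{-\pi}^{\pi}\log f_{Z_\nu}\ge 2\pi\log(d_2^2/2)+\int_{-\pi}^{\pi}\log f_Y>-\infty$, i.e.\ $Z_\nu$ is non-deterministic.

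I expect the non-determinism step to be the main obstacle: propagating the Szegő integrability $\int\log f_Y>-\infty$ through the non-linear transformation, for which the two key observations are that $f_{Z_\nu}$ dominates a positive multiple of the convolution square $f_Y^{*2}$ and that $\int\log(f*g)\ge\int\log f$ for circular probability densities. The computation $d_2=2\sum_j w_j a_j\phi(a_j)>0$ — which guarantees that the leading constant in $\gamma_{Z_\nu}(k)\sim\frac{d_2^2}{2}\gamma_Y(k)^2$ is non-zero, and hence that \eqref{eq: slowly varying convergence assumption} genuinely holds — is the second point requiring care; the remaining assertions are routine once the Hermite/convolution-power representation is in place.
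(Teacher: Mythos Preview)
Your argument is correct and shares the paper's overall framework --- reduce $Z_\nu$ to a finite linear combination of indicators $\mathds{1}\{|Y(k)|>a_j\}$, pass to a Hermite expansion, and read off the spectral measure as a positive combination of periodic convolution powers $f_Y^{*m}$ --- but two ingredients are handled differently. First, where the paper cites \cite{Samorodnitsky.2016} (Theorems~6.3.4 and~6.3.5) for the covariance asymptotics and the spectral representation, you carry out both computations explicitly; in particular, you identify the Hermite rank as exactly $2$ via the clean calculation $d_2=2\sum_j w_j a_j\phi(a_j)>0$, which pins down the concrete asymptotic $\gamma_{Z_\nu}(k)\sim\tfrac{d_2^2}{2}\gamma_Y(k)^2$ and hence the new memory index $\tilde d=2d-\tfrac12$. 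Second, and more substantively, your non-determinism proof is genuinely different from the paper's: the paper argues on the probability side, using the Wold decomposition of $Y$ to exhibit $Z_\nu(n+1)$ as a non-trivial function of the independent innovation $\varepsilon_{n+1}$, and then derives a contradiction with $\sigma(\varepsilon_n,\varepsilon_{n-1},\dots)$-measurability of any purely deterministic projection. You instead stay on the spectral side and verify Szeg\H{o}'s condition directly, via the pointwise bound $f_{Z_\nu}\ge\tfrac{d_2^2}{2}f_Y^{*2}$ together with the Jensen-type inequality $\int_{-\pi}^{\pi}\log(f*g)\ge\int_{-\pi}^{\pi}\log f$ for circular probability densities. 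Your route is self-contained and yields the quantitative lower bound $\int\log f_{Z_\nu}\ge 2\pi\log(d_2^2/2)+\int\log f_Y$; the paper's route is shorter and avoids any spectral estimate, but relies on the Gaussian Wold representation having i.i.d.\ innovations. Both are valid, and your explicit identification of the rank and leading constant is a useful sharpening not spelled out in the paper.
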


\section{Simulation Study}

\label{sec: simulations}

In this section, we perform a simulation study in order to measure the classification performance of our estimators of $d$ empirically. 
The goal of this study is to ensure performance comparability of the variance plot estimator with the GPH estimator.
To do so, we simulated realizations from a unit-variance fGN in \Cref{sec: finite variance}. 
Additionally, we simulated infinite-variance subordinated fGNs in \Cref{sec: infinite variance}. 
Here, we used the subordinated process as described in Equation \eqref{eq: subordinated Gaussian}.
For simplicity, we speak of LRD meaning \gammaRegLRD, \SDLRD{} or \iesLRD{} (where appropriate) for the remainder of \Cref{sec: simulations}.

In both the finite and infinite-variance case, we have simulated in total $N = 12{,}000$ realizations of a (subordinated) fGN using 12 different Hurst parameters $H \in (0, 1)$, i.e.\ $1{,}000$ simulated paths for each value of $H$. The values of $H$ were chosen such that they lie equidistantly and symmetrically around the threshold values $\frac{1}{2}$ and $\frac{3}{4}$, respectively.
Furthermore, these $N$ simulations were run multiple times using varying time series length $n$.

As \Cref{sec: Estimators} demonstrated, the two considered estimators rely on running a linear regression on an observation window $[n_1, n_2]$ where $n_1, n_2 \in \mathbb{N}$ with $n_1 < n_2$.
Although these windows $[n_1, n_2]$ technically lie in different domains, we risk this slight abuse of notation for both.

For time series of length $n$ we would, in principle, choose $n_1 = \lfloor n^{\delta} \rfloor$ and $n_2 = \lceil m n^{\delta} \rceil$ for both estimators where $m > 1$ and $\delta \in (0, 1)$ are parameters.
Unfortunately, as is often the case with semi-parametric statistics, there is no obvious choice for the parameters $m$ and $\delta$.
That is why we vary both $n_1$ and $n_2$ on a grid.
Again, we are interested in comparability of the two estimators only.
We do not discuss an optimal choice of $n_1$ and $n_2$.

After a regression slope $\hat{\theta}_n$ is estimated we classify a time series as LRD if $\hat{\theta}_n > -1$ or non-LRD, otherwise.
Afterwards, we consider common classification metrics for each set of time series of length $n$.
Here, we use the metrics Accuracy, Sensitivity and Specificity.
These can be defined in terms of true/false positive/negative (TP, FP, TN, FN) as given in \cite{Tharwat.2021}:
\begin{align*}
\text{Accuracy} = \frac{
	\text{TP} + \text{TN}
}{
	\text{TP} + \text{FP} + \text{TN} + \text{FN}
}, \quad
\text{Specificity} = \frac{\text{TN}}{\text{TN} + \text{FP}}, \quad
\text{Sensitivity} = \frac{\text{TP}}{\text{TP} + \text{FN}}.	
\end{align*}

In our case, LRD is identified as ``positive". 
Here, we have chosen these metrics for no particular reason other than that they are common.
The R code and all results for this simulation study can be found in the GitHub repository \texttt{ AlbertRapp/LRD\_detection}. 

\subsection{Finite Variance Case}
\label{sec: finite variance}

In this section, we will look at $N = 12{,}000$ realizations $Y_i = \{ Y_i(k), k = 1, \dots, n \}$, $i = 1, \dots, N$, of the unit-variance fGN introduced in \Cref{def: lfsm} and \Cref{rem: lfsm}.
Moreover, the fGNs will be simulated using varying time series lengths $n \in \{ 50, 100, 200, 500 \}$ and equidistant memory parameters $0 {.}3 = H_1 < \dots < H_{12} = 0{.}7$.
These lie symmetrically around the LRD threshold $1 / 2$.

In \Cref{tbl: top metrics finite variance}, we show metrics for time series of length 200 for both GPH as well as the variance estimator.
This table compares the metrics w.r.t.~the five cutoffs that yield the highest Accuracy.
For a complete picture, we have visualized metric estimates resulting from the whole grid of cutoff values $n_1$ and $n_2$ in Figures \ref{fig: fGN metrics variance} and \ref {fig: fGN metrics GPH}. 

\begin{table}
	\centering
	\renewcommand{\arraystretch}{0.6}
	\begin{tabular}[t]{rrrrrrrrrrr}
		\toprule
		\multicolumn{5}{c}{\textbf{Variance estimator}}  & \multicolumn{1}{c}{} & \multicolumn{5}{c}{\textbf{GPH estimator}} \\
		\cmidrule(l{3pt}r{3pt}){1-5} \cmidrule(l{3pt}r{3pt}){7-11}
		$n_1$ & $n_2$ & Accuracy & Sens. & Spec. & \hspace{4mm} & $n_1$ & $n_2$ & Accuracy & Sens. & Spec.\\
		\cmidrule(l{3pt}r{3pt}){1-5} \cmidrule(l{3pt}r{3pt}){7-11}
		1 &	4 &	90.81\%	& 88.35\% &	93.27\% & & 58	& 199	& 89.08\% &	89.17\%	& 89.00\% \\
		1 & 3 &	90.79\% & 88.97\% &	92.62\% & & 59	& 199	& 89.08\% &	89.17\%	& 89.00\% \\
		1 &	2 &	90.37\%	& 89.57\% &	91.17\% & & 1	& 141	& 89.08\% &	89.17\%	& 89.00\% \\
		1 &	5 &	90.34\%	& 87.50\% &	93.18\% & & 1	& 142	& 89.08\% &	89.17\%	& 89.00\% \\
		1 &	6 &	89.73\%	& 86.37\% &	93.10\% & & 1	& 162	& 89.07\% &	89.17\%	& 88.97\% \\
		\midrule
	\end{tabular}
	\caption{Top 5 cutoffs $n_1$ and $n_2$ that yield the highest Accuracy for fractional Gaussian noise time series of length 200. Results are based on $N = 12{,}000$ realizations with Hurst parameters $0 {.}3 = H_1 < \dots < H_{12} = 0{.}7$.}
	\label{tbl: top metrics finite variance}
\end{table}

The take-away from both Table 1 and Figures \ref{fig: fGN metrics variance} and \ref{fig: fGN metrics GPH} is as follows.
Overall, the variance estimator performs best (in the sense of high accuracy) with little or no cutoff on the left, i.e. $n_1$ is close to 1.
The same thing can happen for the GPH estimator.
However, the GPH estimator can also perform well for other values of $n_1$.

In summary, the variance-based estimator performs better or at least similarly well compared to the GPH estimator.
Here, we have seen this specifically for time series of length 200.
The results for other time series lengths are similar.
And as expected, all metrics improve for both estimators as the time series length $n$ increases. 
That is why we have only showcased time series of length 200 here.
One can find the corresponding tables and figures for $n \neq 200$ in the aforementioned GitHub repository.

\begin{figure}
	\centering
	\includegraphics[width=0.9\linewidth]{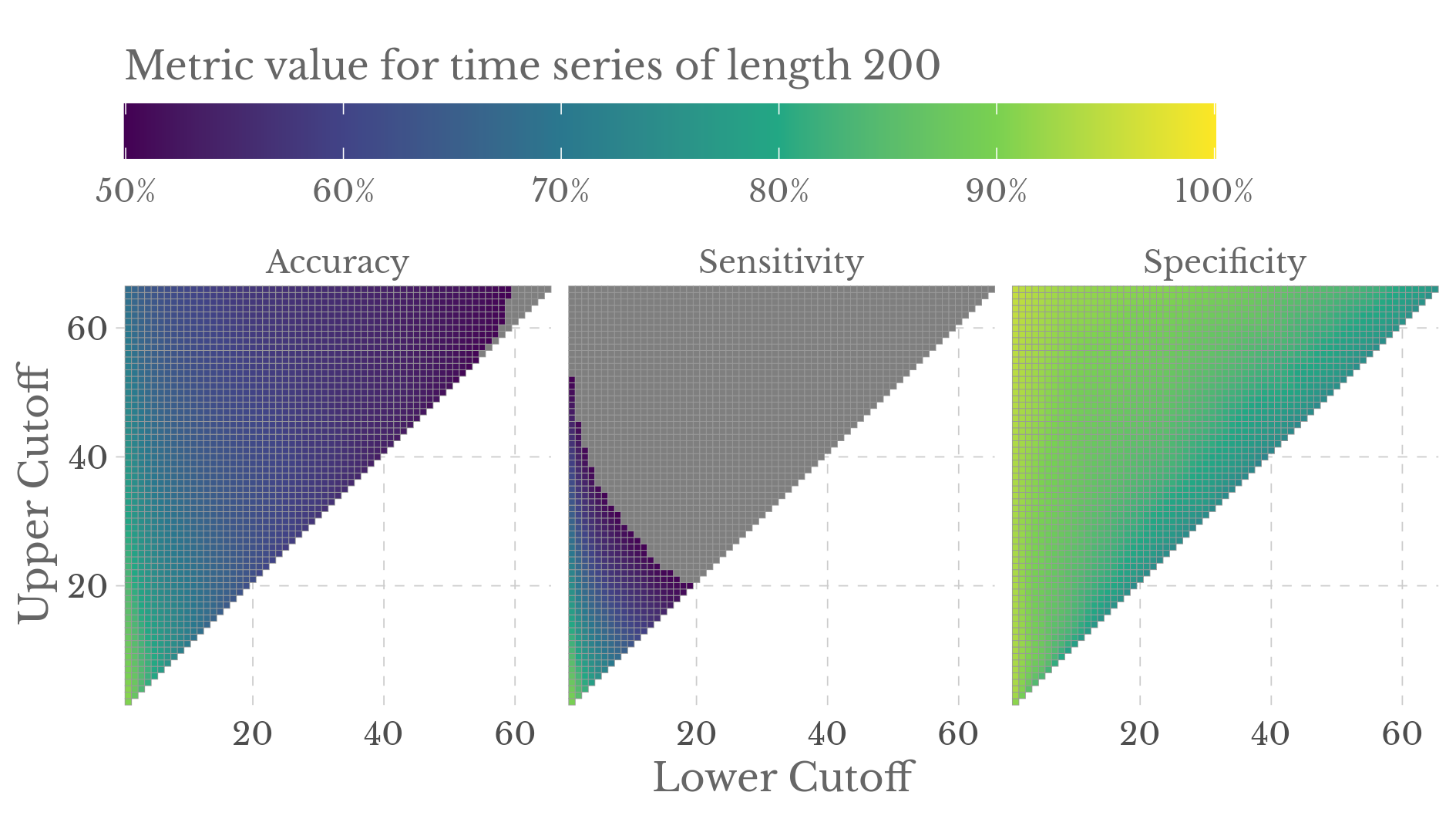}
	\caption{Evaluated metrics of variance plot estimator on observation windows $[n_1, n_2]$. Results are based on $N = 12{,}000$ fGN time series of length 200 with Hurst parameters $0 {.}3 = H_1 < \dots < H_{12} = 0{.}7$. Grey color implies that a metric was below 50\%. Preliminary analysis have shown that cutoff values larger than 60 deliver worse results. Hence, these have been left out.}
	\label{fig: fGN metrics variance}
\end{figure}

\begin{figure}
	\centering
	\includegraphics[width=0.9\linewidth]{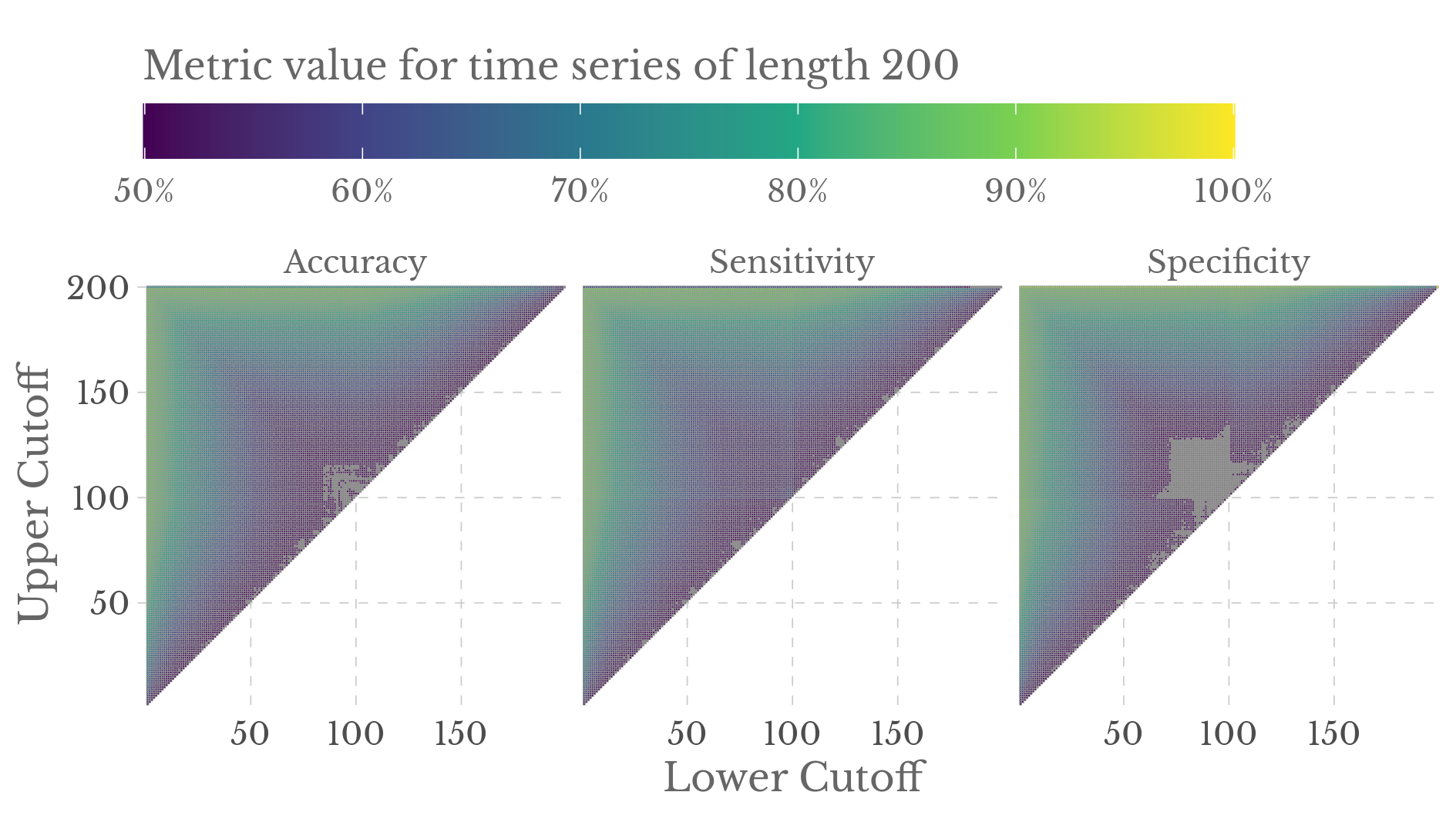}
	\caption{Evaluated metrics of GPH estimator on observation windows $[n_1, n_2]$. Results are based on $N = 12{,}000$ fGN time series of length 200 with Hurst parameters $0 {.}3 = H_1 < \dots < H_{12} = 0{.}7$. Grey color implies that a metric was below 50\%.}
	\label{fig: fGN metrics GPH}
\end{figure}

\subsection{Infinite Variance Case}
\label{sec: infinite variance}

In this section, we basically run the same analysis as in \Cref{sec: finite variance} for infinite-variance subordinated fGNs $Z(t) = \exp\{Y^2(t) / (2 \alpha)\}$ where $\alpha \leq 2$ and $Y$ is a unit-variance fGN with Hurst parameter $H$.
Clearly, LRD detection with either the variance or the GPH estimator is not tailored to infinite-variance time series of this kind.
But as was pointed out in \Cref{rem: LRD-Connections}\ref{rem: LRD-Connections pos. associated} and \Cref{thm: cov subordinated fGN}, the time series $Z$ is \iesLRD{} iff the transformed time series
\begin{align*}
Z_\nu(k) := \int_{\mathbb{R}} \mathds{1}\{ Z(k) > u \}\ \nu(\mathrm{d}u), \quad k \in \mathbb{Z},
\end{align*}
is \gammaRegLRD{} for a finite measure $\nu$.
Thus, we can transform $Z$ using a suitable measure $\nu$ and apply the variance and GPH estimators to this transformed time series. 
In the case of the variance estimator, \Cref{THM: TRAFO IS OKAY} showed that this non-linear transformation is justified and does not invalidate our main result \Cref{THM: CONSISTENCY VAR PLOT}.
Notice that it is not clear whether the same  is true for the GPH estimator.
For the sake of comparison, we apply the GPH estimator in this setting anyway.

Technically, using only a single measure $\nu$ may not suffice to detect LRD sufficiently well.
From the theoretical point of view, it is never guaranteed that we have chosen the ``right'' $\nu$.
Thus, it is still an open question how $\nu$ should be chosen in an optimal way or even if one should test multiple measures $\nu$.

However, this does not matter for comparing the variance-based estimator with the GPH estimator.
For simplicity, given a realization of a time series $X$ we have chosen $\nu$ from
\begin{align*}
\mathcal{M}_{X, \psi} = \bigg\{ \nu = \frac{1}{\psi} \sum_{k = 1}^{\psi} \delta_{X, a_k} \, \bigg\vert \, a_1, \ldots, a_\psi \in (0, 1)  \bigg\}
\end{align*}
where $\delta_{X, a_k}$ describes the Dirac measure concentrated at the $a_k$-th empirical quantile from observations in the time series $X$.
Here, we have chosen $\nu \in \mathcal{M}_{X, 100}$ where $a_1, \ldots, a_{100}$ were randomly chosen from $(0, 1)$ but are the same for every time series $X$.

Notice that the integral \eqref{eq:  iesLRD} w.r.t.\ to this measure $\nu$ will be infinite iff the integral (2.3) is infinite for at least one Dirac measure $\delta_{X, a_k}$, $k=1,\ldots,\psi$.
From a theoretical perspective, the chosen measure $\nu$ tests the finiteness of the integral \eqref{eq:  iesLRD} not only for one but for many measures.
As the results will show, our choice of $\nu$ leads to results that are sufficient for the intended comparison of estimators. 
Notice that the measures $\nu \in \mathcal{M}_{X, \psi}$ use empirical quantiles to construct suitable level sets in order to calculate the transformed process $Z_\nu$.
This is a reasonable choice to capture much of the underlying process' behavior. 
Otherwise, a bad choice of levels $a_k$ may lead to uninformative level sets.

As a consequence, the transformed process $Z_\nu$ does not change when the underlying process $Z$ is changed via a monotonic transformation.
Here, this leads to the fact that both classification procedures yield the same results for different values of $\alpha \in (0, 2]$.
Consequently, we can reduce our simulation study to $\alpha = 1$.

Now, we simulated $N = 12{,}000$ paths of our subordinated fGN using $12$ distinct Hurst parameters $0 {.}6 = H_1 < \dots < H_{12} = 0{.}9$ which are equidistantly and symmetrically distributed around the threshold value $0{.}75$.
Then, all of these realizations were transformed to a finite-variance time series using the described measure $\nu$.
Finally, the same analysis as in \Cref{sec: finite variance} was performed on these transformed time series and the results are depicted in \Cref{tbl: top metrics infinite variance} and Figures \ref{fig: subfGN metrics variance} and \ref{fig: subfGN metrics GPH}.

\begin{table}
	\centering
	\renewcommand{\arraystretch}{0.6}
	\begin{tabular}[t]{rrrrrrrrrrr}
		\toprule
		\multicolumn{5}{c}{\textbf{Variance estimator}}  & \multicolumn{1}{c}{} & \multicolumn{5}{c}{\textbf{GPH estimator}} \\
		\cmidrule(l{3pt}r{3pt}){1-5} \cmidrule(l{3pt}r{3pt}){7-11}
		$n_1$ & $n_2$ & Accuracy & Sens. & Spec. & \hspace{4mm} & $n_1$ & $n_2$ & Accuracy & Sens. & Spec.\\
		\cmidrule(l{3pt}r{3pt}){1-5} \cmidrule(l{3pt}r{3pt}){7-11}
		1 &	14 &	67.75\%	& 87.70\% &	47.80\% & & 112	& 195	& 64.36\% &	93.10\%	& 35.62\% \\
		1 & 17 &	67.74\% & 84.90\% &	50.58\% & & 5	& 88	& 64.36\% &	93.10\%	& 35.62\% \\
		1 &	15 &	67.66\%	& 86.73\% &	48.58\% & & 107	& 194	& 64.32\% &	93.20\%	& 35.43\% \\
		1 &	11 &	67.65\%	& 90.58\% &	44.72\% & & 6	& 93	& 64.32\% &	93.20\%	& 35.42\% \\
		1 &	16 &	67.65\%	& 85.65\% &	49.65\% & & 5	& 90	& 64.30\% &	93.30\%	& 35.30\% \\
		\midrule
	\end{tabular}
	\caption{Top 5 cutoffs $n_1$ and $n_2$ that yield the highest Accuracy for subordinated fractional Gaussian noise time series of length 200. Results are based on $N = 12{,}000$ realizations with Hurst parameters $0 {.}6 = H_1 < \dots < H_{12} = 0{.}9$.}
	\label{tbl: top metrics infinite variance}
\end{table}

\begin{figure}
	\centering
	\includegraphics[width=0.9\linewidth]{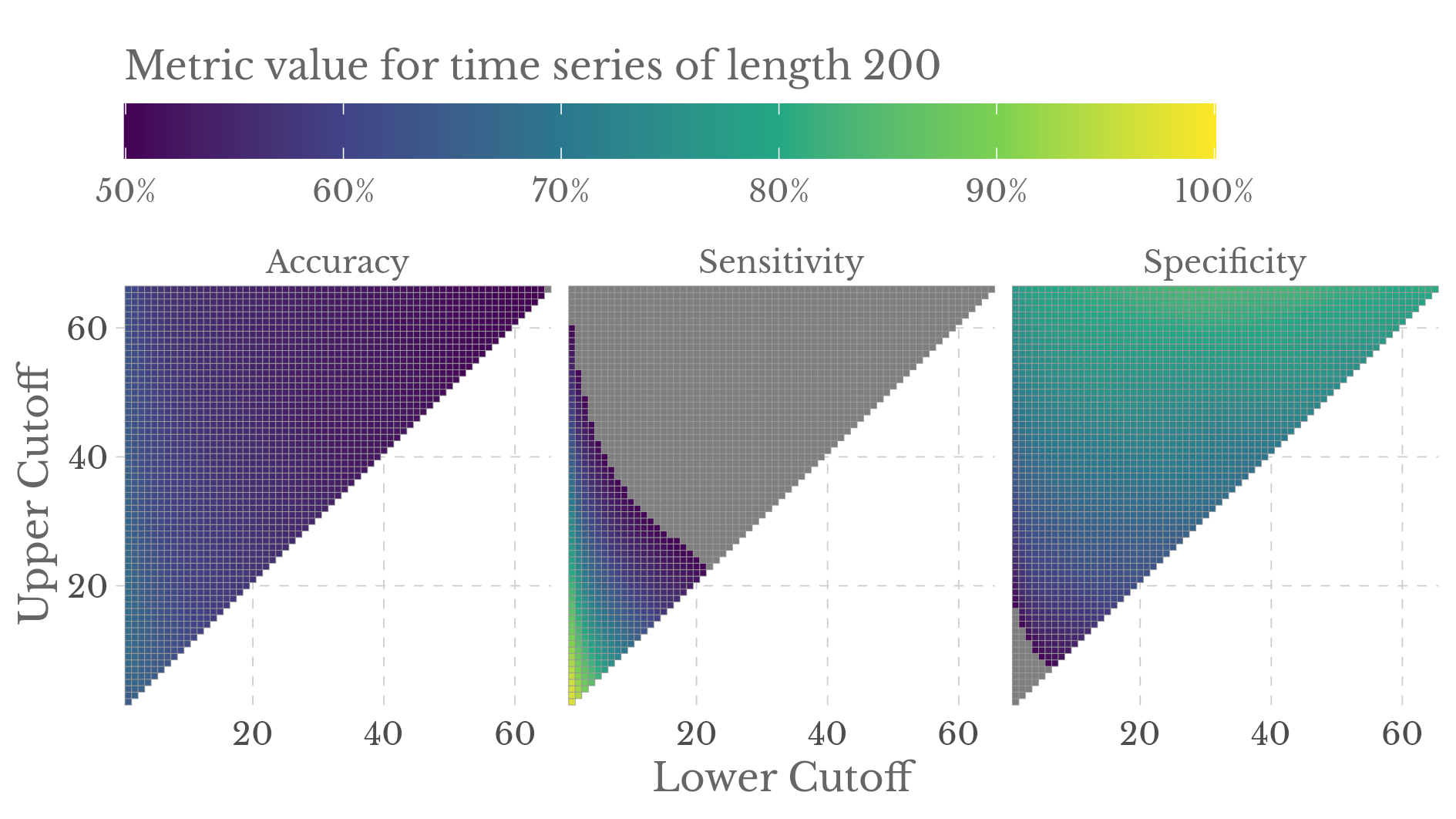}
	\caption{Evaluated metrics of variance plot estimator on observation windows $[n_1, n_2]$. Results are based on $N = 12{,}000$ subordinated fGN time series of length 200 with Hurst parameters $0 {.}6 = H_1 < \dots < H_{12} = 0{.}9$. Grey color implies that a metric was below 50\%. Preliminary analysis have shown that cutoff values larger than 60 deliver worse results. Hence, these have been left out.}
	\label{fig: subfGN metrics variance}
\end{figure}

\begin{figure}
	\centering
	\includegraphics[width=0.9\linewidth]{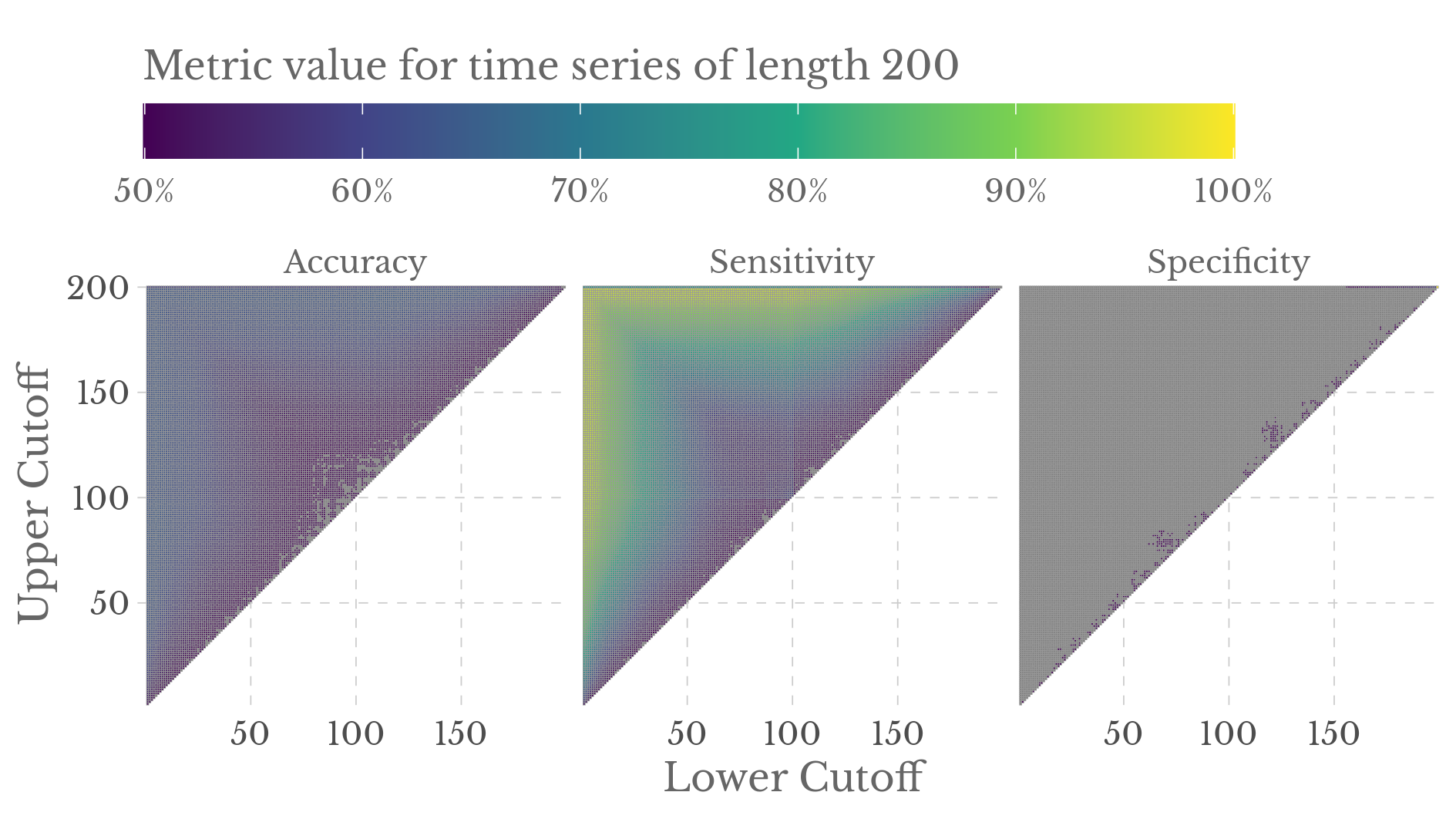}
	\caption{Evaluated metrics of GPH estimator on observation windows $[n_1, n_2]$. Results are based on $N = 12{,}000$ subordinated fGN time series of length 200 with Hurst parameters $0 {.}6 = H_1 < \dots < H_{12} = 0{.}9$. Grey color implies that a metric was below 50\%.}
	\label{fig: subfGN metrics GPH}
\end{figure}

Again, the results show that the variance estimator performs similarly well compared to the GPH estimator.
As expected, one can see that both estimators perform worse than in the finite-variance case.
Also, it appears as if the GPH estimator has a tendency towards low specificity.
In this case, this means that the variance estimator can detect LRD better.

Let us stress that, regardless of the specific choice of $\nu$, the main finding of this simulation study is that the variance-based estimator can  perform better than the GPH estimator. 
This is especially important because the latter estimator is one natural choice to detect LRD empirically whereas the former estimator is referred to ``heuristic''.

Therefore, this simulation study and the theoretical findings from \Cref{sec: consistency} demonstrate once again that time domain estimators can be more than ``heuristic". 
In fact, they can be as good or sometimes even better as spectral-domain estimators w.r.t.\ various classification metrics.
This has been explicitly demonstrated here for the variance and GPH estimators.

\section{Discussion}
\label{sec: discussion}

Let us put our work into the context of the existing literature.
\cite{Giraitis1999} consider the same variance plot estimator as we do and prove some asymptotic results.
Compared to our results, these use additional second order conditions of the behavior of the auto-covariance function.
Also, their results are tailored to Gaussian processes.
Unfortunately, these restrictions do not permit to use the variance plot estimator in the infinite-variance setting.

Let us also note that our main result (\Cref{THM: CONSISTENCY VAR PLOT}) was proven for a large class of stationary time series models.
In contrast, most existing estimators are proven for special classes like FARIMA or Gaussian processes.
However, in the infinite-variance case we cannot assume such a model class because we transform the time series such that its variance becomes finite.

Furthermore, \cite{McElroy2007} consider the so-called scans method for estimating convergence rates of a desired statistic $T$.
For example, this could be used to estimate the memory parameter $d$ by letting $T$ be the sample variance of the sample mean. 
Similar to our estimator, different values of a statistic are then  plugged into an OLS regression on a log-log scale.
We consider all blocks of a given length $n$ and combine them into one estimate for $\Var(\mean{X}{n})$ before using these estimates for regression.
However, their estimator relies on "scans".
These are nested subsamples of the observed time series $X$, e.g. $(X(1))$, $(X(1), X(2))$, $(X(1), X(2), X(3))$, etc.
Regression is then performed on $T(X(1))$, $T(X(1), X(2))$, $T(X(1), X(2), X(3)),$ etc. 
Since there exist $2^{n - 1}$ possible scans (sequences of nested subsamples) for a time series of length $n$ with no ties, their method computes many estimates of the desired convergence rates.
Subsequently, all of these estimates can be summarized into one estimate by taking the mean or the median of these estimates.
Finally, the authors of that paper comment that their procedure is computer-intensive and tolerable if only a single data set is involved but it is unsuitable for a simulation study.
Also, they propose an alternative algorithm that uses a computational shortcut.
Unfortunately, this shortcut is only valid for weakly dependent time series.
In particular, this prohibits its use in our LRD setting.

\section*{Funding}
Marco Oesting gratefully acknowledges funding by Deutsche Forschungsgemeinschaft (DFG, German Research Foundation) under Germany’s Excellence Strategy -- EXC 2075 -- 390740016.

\section*{Disclosure statement}

The authors report there are no competing interests to declare.

\section*{Data Availability Statement}
The data and code that support the findings of this study are openly available in \texttt{AlbertRapp/LRD\_detection} at \texttt{https://github.com/AlbertRapp/LRD\_detection}

\bibliography{lit}

\appendix

\section{Proof of Theorem \ref{THM: CONSISTENCY VAR PLOT}}
\label{appendix: main result}

Our proof of Theorem \ref{THM: CONSISTENCY VAR PLOT} is heavily influenced by \cite{Kim.2011}.
In that paper, the authors prove consistency of a block-bootstrap estimator for time series $X = \big\{ X(k), k \in \mathbb{Z} \big\}$ defined by
$
X(k) = \mu + \sum_{j = -\infty}^{\infty} a_j \varepsilon_{k - j}.
$
In their paper, $a_j \in \mathbb{R}$, $\mu \in \mathbb{R}$, $\sum_{j = -\infty}^{\infty} a_j^2 < \infty$ and $(\varepsilon_j)_{j \in \mathbb{Z}}$ is a sequence of identically distributed, zero mean, finite-variance, \textit{independent} random variables with  a covariance function which fulfills $
\Cov(X(0), X(k)) \sim \sigma^2 k^{-\theta}$, $k \rightarrow \infty, 
$
where $\theta \in [-1, 0)$.
Also, let us note that their proof does not actually use the independence of the innovations but relies solely on uncorrelatedness.

Thus, we first prove \Cref{THM: CONSISTENCY VAR PLOT} for linear time series with \textit{uncorrelated} innovations $\varepsilon_{j}$ in \Cref{thm: supremum convergence} which follows along the lines of the proof from \cite{Kim.2011} but makes adjustments where necessary.
At the end of this section, the proof of \Cref{THM: CONSISTENCY VAR PLOT} draws a bridge from stationary time series to linear processes with \textit{uncorrelated} innovations $\varepsilon_{j}$.

\begin{Lemma}
	\label{thm: supremum convergence}
	Let $X = \{ X(k), k \in \mathbb{N} \}$ be a time series which can be represented as
	$
	X(k) = \mu + \sum_{j = -\infty}^{\infty} a_j \varepsilon_{k - j}
	$, $k \in \mathbb{Z}$,
	where $\mu \in \mathbb{R}$, $(a_j)_{j \in \mathbb{Z}}$ is a square-summable sequence of real numbers and $(\varepsilon_j)_{j \in \mathbb{Z}}$ is a sequence of identically distributed and uncorrelated (but \textit{not} necessarily independent) random variables with $\mathbb{E}\varepsilon_1 = 0$ and $\mathbb{E}\varepsilon_1^2 < \infty$. Also, assume that the autocovariance function $\gamma(n) = L_{\gamma}(n)n^{2d-1}$, $d \neq 0$, of $X$ fulfills $L_{\gamma}(n) \rightarrow \cgamma \neq 0$ as $n \rightarrow \infty$.\\
	Furthermore, define estimates of $\Var(\mean{X}{l}) = \lvar(l) l^{\theta}$, $\theta \in (-2, 0)$, for $l = 1, \ldots, n$ via
	$$
	S_l^2 := \frac{1}{n - l + 1} \sum_{k = 1}^{n - l + 1} (\mean{B}{k,l} - \muhat)^2,
	$$
	where $\mean{B}{k,l}$, $k = 1, \dots, n - l + 1$, denotes the mean of the block $(X(k), \dots, X(k + l - 1))$ and $\muhat$ is the sample mean of all block means $\mean{B}{k,l}$ of length $l$.
	
	If $n_1 = n^{\delta}$ and $n_2 = m n_1$ where $m > 1$ and $0 < \delta <  \min \Big\{ \frac{2 \vert \theta \vert}{4 \vert \theta \vert + 1} ,  \frac{\vert \theta \vert}{\vert \theta \vert + (|\theta|-1)_+ + 1}\Big\}$, it holds that
	$$
	\sup_{k = 1, \dots, N} \vert  \yk - (\log \cvar - \vert \theta \vert \xk) \big\vert \stackrel{\text{P}}{\rightarrow} 0
	$$
	as $n \rightarrow \infty$ where $N = n_2 - n_1 + 1$, $\xk = \log(n_1 + k - 1)$, $\yk = \log S^2_{n_1 + k - 1}$, $k = 1, \dots, N$ and $\cvar = \lim_{n \rightarrow \infty} n^{\vert \theta \vert} \Var(\mean{X}{n}) = \cgamma v(d)$ is a positive constant given by \Cref{thm: variance-behaviour} and \Cref{rem: behaviour of variance} \ref{item: cvar and cgamma}.
\end{Lemma}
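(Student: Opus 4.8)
The plan is to show that the random quantity $\yk = \log S^2_{n_1 + k - 1}$ is uniformly close to its deterministic target $\log\cvar - \vert\theta\vert\xk = \log(\Var(\mean{X}{l})) + o(1)$ over the block-length range $l \in [n_1, n_2]$. The natural route is to first reduce the claim on the logarithmic scale to a \emph{relative} error statement on the original scale: since $\Var(\mean{X}{l}) = \lvar(l) l^\theta$ with $\lvar(l) \to \cvar > 0$, uniform closeness of $\yk$ to $\log\cvar - \vert\theta\vert\xk$ is equivalent (via continuity of $\log$ away from $0$, and the fact that the target stays bounded away from $0$ and $\infty$ only after we absorb the $l^\theta$ factor — more precisely one works with $\log(l^{\vert\theta\vert} S_l^2)$ versus $\log(l^{\vert\theta\vert}\Var(\mean{X}{l}))$) to
\[
\sup_{n_1 \le l \le n_2} \left\vert \frac{S_l^2 - \Var(\mean{X}{l})}{\Var(\mean{X}{l})} \right\vert \stackrel{\text{P}}{\rightarrow} 0.
\]
So the core task is a uniform law of large numbers for the overlapping-block sample variances, normalized by their means.

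Next I would decompose $S_l^2 - \Var(\mean{X}{l})$ into a ``bias'' part and a ``fluctuation'' part. Writing $\mean{B}{k,l}$ for the block means and recalling $\EE \mean{B}{k,l} = \mu$, one has $S_l^2 = \frac{1}{n-l+1}\sum_k (\mean{B}{k,l} - \mu)^2 - (\muhat - \mu)^2$. The term $\frac{1}{n-l+1}\sum_k (\mean{B}{k,l}-\mu)^2$ has expectation exactly $\Var(\mean{B}{1,l}) = \Var(\mean{X}{l})$, so the analysis splits into (i) controlling $\EE[(\muhat - \mu)^2]$ relative to $\Var(\mean{X}{l})$, which is an overlapping-block average of $l$-block means and whose variance can be bounded using the regular variation of $\gamma$ (this is where the upper bound on $\delta$ involving $\frac{\vert\theta\vert}{\vert\theta\vert + (\vert\theta\vert-1)_+ + 1}$ should enter, since the relevant rate depends on whether $\vert\theta\vert \lessgtr 1$, i.e.\ on the summability behaviour of the block-mean covariances); and (ii) controlling the centered fluctuation $\frac{1}{n-l+1}\sum_k \big[(\mean{B}{k,l}-\mu)^2 - \Var(\mean{X}{l})\big]$ in $L^2$, uniformly in $l$. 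For (ii) one computes the variance of this average, which involves fourth-order mixed moments of the block means; here the linear representation $X(k) = \mu + \sum_j a_j \varepsilon_{k-j}$ with uncorrelated innovations is exactly what allows one to reduce fourth moments to products of second moments plus a controllable remainder — this is precisely the step where the Kim--Nordman argument is adapted, and where one must be careful that only uncorrelatedness (not independence) of the $\varepsilon_j$ is used. A Markov inequality then turns the $L^2$ bounds into the stochastic-sup statement, after a union bound over the $N = n_2 - n_1 + 1 = O(n_1)$ values of $l$ (the linear growth of $N$ in $n_1$ is cheap and is what forces the two competing constraints on $\delta$ to be combined in the minimum).

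I expect the main obstacle to be step (ii): obtaining a bound on $\Var\!\big(\frac{1}{n-l+1}\sum_k (\mean{B}{k,l}-\mu)^2\big)$ that, after dividing by $\Var(\mean{X}{l})^2 \asymp l^{2\theta}$, tends to $0$ uniformly over $l \in [n_1, n_2]$ and survives the union bound over $l$ and the growth of $n$. The fourth-moment computation produces several terms — a ``diagonal'' contribution of order $l^{2\theta}/(n-l+1)$ times a bounded factor, and off-diagonal contributions governed by sums of products $\gamma_{\text{block}}(\cdot)\gamma_{\text{block}}(\cdot)$ whose size depends on the regular-variation exponent — and matching the worst of these against $l^{2\theta}$ while keeping $n_2 = m n_1 = m n^\delta \ll n$ is what pins down $\delta < \frac{2\vert\theta\vert}{4\vert\theta\vert+1}$. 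A secondary technical nuisance is that $\lvar(l)$ is only \emph{asymptotically} $\cvar$, so the reduction from the log-scale claim to the relative-error claim must carry along an $o(1)$ that is uniform in $l \in [n_1,n_2]$; this follows from the uniform convergence theorem for slowly varying functions on $[n_1, \infty)$ since $n_1 \to \infty$, but it needs to be invoked explicitly. Once these estimates are in place, assembling them via the triangle inequality and continuity of the logarithm gives the stated uniform convergence in probability.
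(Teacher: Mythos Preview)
Your overall architecture matches the paper's: the same reduction to $\sup_l |l^{|\theta|}S_l^2 - c_v| \stackrel{\text{P}}{\to} 0$, the same split into (i) the centering error $(\hat\mu_{n,l}-\mu)^2$ (yielding $\delta < \frac{|\theta|}{|\theta| + (|\theta|-1)_+ + 1}$ via Markov) and (ii) the fluctuation of $\frac{1}{n-l+1}\sum_k(\bar B_{k,l}-\mu)^2$, with (ii) responsible for $\delta < \frac{2|\theta|}{4|\theta|+1}$.

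The gap is in (ii). You plan to bound the fluctuation in $L^2$ by computing its variance and ``reducing fourth moments to products of second moments'' through the linear representation. But the only moment hypothesis is $\mathbb{E}\varepsilon_1^2 < \infty$; there is no fourth-moment assumption, so $(\bar B_{k,l}-\mu)^2$ need not lie in $L^2$ and the variance you want to compute may fail to be finite --- the reduction you describe would in any case leave diagonal terms proportional to $\mathbb{E}\varepsilon_0^4$, which is not controlled. The paper (following Kim--Nordman) circumvents this with a truncation step that you omit: one introduces bounded innovations $\varepsilon_{j,b} = \varepsilon_j\mathds{1}\{|\varepsilon_j|\le b\} - \mathbb{E}[\varepsilon_j\mathds{1}\{|\varepsilon_j|\le b\}]$ and the corresponding $S_{n,l,b}$, then splits $|S_{n,l}-c_v|$ into $|S_{n,l}-S_{n,l,b}|$ (controlled in $L^1$ by $C\sqrt{\mathbb{E}[\varepsilon_0^2\mathds{1}\{|\varepsilon_0|>b\}]}$, using only second moments), $|S_{n,l,b}-c_b c_v|$ (now Chebyshev \emph{is} legitimate since the truncated innovations are bounded; this is where the covariance estimate on $\Cov(\bar B_{1,l},\bar B_{1+k,l})$ and an auxiliary parameter $\xi_n$ enter to produce the constraint $\delta < \frac{2|\theta|}{4|\theta|+1}$), and the deterministic $|c_b c_v - c_v|$ with $c_b = \mathbb{E}\varepsilon_{0,b}^2/\mathbb{E}\varepsilon_0^2 \to 1$. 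Finally $b=b_n\to\infty$ is chosen so that all three pieces survive the union bound over $l\in[n_1,n_2]$. Without this truncation layer, your step (ii) does not go through under the stated second-moment assumption.
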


\begin{proof}
	Let $\varepsilon > 0$ and $\mathcal{N}= \{ n_1, \ldots, n_2 \}$. We would like to show that
	$$
	\mathbb{P} \big(
	\sup_{k = 1, \dots, N} \big\vert \yk - (\log \cvar - \vert \theta \vert \xk) \big\vert > \varepsilon
	\big) 
	=
	\mathbb{P} \big(
	\sup_{l \in \mathcal{N}} \big\vert \log\big( l^{\vert \theta \vert} S^{2}_l  \big) - \log \cvar \big\vert > \varepsilon
	\big)
	$$
	converges to zero as $n \rightarrow \infty$ for any $\varepsilon > 0$. 
	Instead of considering the supremum of the difference of logarithms, we can also consider the difference of the original quantities, i.e.\ we need to show
	$
	\mathbb{P} \big(
	\sup_{l \in \mathcal{N}} \big\vert l^{\vert \theta \vert} S^2_{l} - \cvar  \big\vert > \varepsilon
	\big) 
	\rightarrow
	0. 
	$
	Notice that in this proof we always use $l \in \mathcal{N}$ and $\frac{l}{n} \rightarrow 0$ as $n \rightarrow \infty$~.
	
	Thus, let us assume w.l.o.g~$\mu = 0$ and compute
	\begin{align*}
	&\mathbb{P} \Big(
	\sup_{l \in \mathcal{N}} \big\vert l^{\vert \theta \vert} S^2_{l} - \cvar  \big\vert > \varepsilon
	\Big) 
	=
	\mathbb{P} \bigg(
	\sup_{l \in \mathcal{N}} \bigg\vert  
	\frac{l^{\vert \theta \vert}}{n - l + 1} \sum_{i = 1}^{n - l + 1} \mean{B}{i,l}^2
	-
	l^{\vert \theta \vert} \hat{\mu}_{n, l}^2
	-
	\cvar
	\bigg\vert > \varepsilon
	\bigg) \\
	&=
	\mathbb{P} \Big(
	\sup_{l \in \mathcal{N}} \big\vert  
	\sumBlockSquares
	-
	l^{\vert \theta \vert} \hat{\mu}_{n, l}^2
	-
	\cvar
	\big\vert > \varepsilon
	\Big) 
	\leq
	\mathbb{P}\Big( 
	\sup_{l \in \mathcal{N}} \big\vert  
	\sumBlockSquares
	-
	\cvar
	\big\vert
	> 
	\frac{\varepsilon}{2}
	\Big)
	+
	\mathbb{P}\bigg( 
	\sup_{l \in \mathcal{N}} 
	l^{\vert \theta \vert} \hat{\mu}_{n, l}^2
	>
	\frac{\varepsilon}{2}
	\bigg),
	\end{align*}
	where $\sumBlockSquares = \frac{l^{\vert \theta \vert}}{n - l + 1} \sum_{i = 1}^{n - l + 1} \mean{B}{i,l}^2$.
	Analogously to \cite{Kim.2011}, we obtain
	\begin{align*}
	l^{\vert \theta \vert}\mathbb{E}[ \hat{\mu}_{n, l}^2] 
	&= 
	l^{\vert \theta \vert}\mathrm{Var}( \hat{\mu}_{n, l}) \\
	&\leq 
	\frac{l^{\vert \theta \vert}}{(n - l + 1)^2} 
	\Var \bigg( 
	n\mean{X}{n} 
	- 
	\sum_{k = 1}^{k} \Big( 1 - \frac{k}{l} \Big)X(k)
	-
	\sum_{k = 1}^{k} \Big( 1 - \frac{k}{l} \Big)X(n - k + 1)
	\bigg) \\
	&\leq
	\frac{l^{\vert \theta \vert}}{(n - l + 1)^2} \bigg( 
	3 \Var(n\mean{X}{n}) + 6 \Var \bigg( \sum_{k = 1}^{k} \Big( 1 - \frac{k}{l} \Big)X(k) \bigg)
	\bigg) 
	\end{align*}
	by using that $\Var(X) + \Var(Y) - 2 \Cov(X, Y) \geq 0$ for any random variables $X, Y \in \mathcal{L}^2$.\\
	Notice that
	$
	(n-l+1)^{-2} \mathrm{Var}(n \mean{X}{n}) \sim \mathrm{Var}(\mean{X}{n}) \sim c_v n^{-|\theta|} 
	$
	and 
	\begin{align*}
	\frac{1}{(n-l+1)^2} &\mathrm{Var}\left(\sum_{k=1}^l  \bigg(1-\frac{k}{l}\bigg)  X(k)\right) 
	\leq 
	\frac{l}{(n-l+1)^2}  \sum_{k=0}^l |\gamma(k)| = \mathcal{O}\left(\frac{l^{2- (|\theta| \wedge 1)}}{n^2}\right).
	\end{align*}
	Thus, there is a constant $C > 0$ such that $l^{\vert \theta \vert}\mathbb{E}[ \hat{\mu}_{n, l}^2] \leq C \big( \frac l n \big)^{|\theta|} \cdot l^{(|\theta|-1)_+}$.\\
	Now, applying Markov's inequality gives us
	\begin{align}
	\mathbb{P}\bigg( 
	\sup_{l \in \mathcal{N}} 
	l^{\vert \theta \vert} \hat{\mu}_{n, l}^2
	>
	\frac{\varepsilon}{2}
	\bigg)
	\leq
	C_1 \sum_{l = n_1}^{n_2} l^{\vert \theta \vert} \mathbb{E}[ \hat{\mu}_{n, l}^2] 
	\leq
	C_2 (n_2 - n_1 + 1) \frac{n_2^{\vert \theta \vert + (|\theta|-1)_+}}{n^{\vert \theta \vert}},
	\label{eq: bound after Markov}
	\end{align}
	where $C_1$ and $C_2$ are suitable positive constants.

	Clearly, the RHS of Inequality \eqref{eq: bound after Markov} converges to zero for $n_1 = n^{\delta}$, $n_2 = mn^\delta$ and $\delta < \frac{\vert \theta \vert}{\vert \theta \vert + (|\theta|-1)_+ + 1}$.
	Thus, it remains to show that $\mathbb{P}(\sup_l \vert \sumBlockSquares - \cvar \vert > \varepsilon / 2)$ converges to zero.
	In alignment with the proof of Lemma A.1(b) in \cite{Kim.2011}, let $b > 1$ and define a modified version of our time series $X$ via bounded and centered innovations, i.e.\
	$
	\varepsilon_{j, b} 
	= 
	\varepsilon_j \mathds{1}\{ \vert \varepsilon_j \vert \leq b\}
	- 
	\mathbb{E}[\varepsilon_j \mathds{1}\{ \vert \varepsilon_j \vert \leq b\}]
	$ and $
	X_b(k)
	=
	\sum_{j = -\infty}^{\infty}a_j \varepsilon_{k-j, b}. 
	$
	Consequently, we can define corresponding quantities for $\mean{B}{i, l}$ and $\sumBlockSquares$ as
	\begin{align*}
	\mean{B}{i, l, b} = \frac{1}{l} \sum_{k = i + 1}^{i + l} X_b(k)
	\quad
	\text{ and }
	\quad
	\sumBlockSquaresMod 
	= 
	\frac{l^{\vert \theta \vert}}{n - l + 1} \sum_{k = 1}^{n - l + 1} \mean{B}{k, l, b}^2.
	\end{align*}
	Also, notice that $1 \geq c_b := \mathbb{E}\varepsilon_{j, b}^2/\mathbb{E}\varepsilon_{j}^2 \rightarrow 1$ as $b \rightarrow\infty$. 
	Now, it holds that
	\begin{align*}
	&\mathbb{P}\Big( 
	\sup_{l \in \mathcal{N}} \big\vert  
	\sumBlockSquares
	-
	\cvar
	\big\vert
	> 
	\frac{\varepsilon}{2}
	\Big) \\
	&\leq
	\mathbb{P}\Big( 
	\sup_{l \in \mathcal{N}} \big\vert  
	\sumBlockSquares
	-
	\sumBlockSquaresMod
	\big\vert
	> 
	\frac{\varepsilon}{6}
	\Big) 
	+ 
	\mathbb{P}\Big( 
	\sup_{l \in \mathcal{N}} \big\vert  
	\sumBlockSquaresMod
	-
	c_b\cvar
	\big\vert
	> 
	\frac{\varepsilon}{6}
	\Big) + 
	\mathbb{P}\Big( 
	\sup_{l \in \mathcal{N}} \big\vert  
	c_b\cvar
	-
	\cvar
	\big\vert
	> 
	\frac{\varepsilon}{6}
	\Big) \\
	&=: \text{(I)} + \text{(II)} + \text{(III)}.
	\end{align*}
	Next, let us show that all summands (I), (II) and (III) converge to zero as $n \rightarrow \infty$.
	First, notice that (III) converges to zero as $b \rightarrow \infty$ by the (non-random) convergence $c_b \rightarrow 1$ as $b \rightarrow \infty$.
	Therefore, ensuring the desired asymptotic behavior of (III) is only a matter of replacing $b$ by a constructed sequence $b_n$ that depends on $n$ such that $b_n \rightarrow \infty$ as $n \rightarrow \infty$.
	Additionally, choosing the sequence $b_n$ such that it diverges fast enough in the sense that 
	$
	n^\delta \sqrt{\mathbb{E}[\varepsilon_{0}^2 \mathds{1} \{ \vert \varepsilon_{0} \vert > b_n \}]} \rightarrow 0
	$
	as $n \rightarrow \infty$ ensures that also (I) converges to zero as $n \rightarrow \infty$.
	This follows from the fact that there exists a constant $C > 0$ such that 
	\begin{align*}
	\mathbb{P}\Big( 
	\sup_{l \in \mathcal{N}} \big\vert  
	\sumBlockSquares
	-
	\sumBlockSquaresMod
	\big\vert
	> 
	\frac{\varepsilon}{6}
	\Big)
	&\leq \sum_{l=n_1}^{n_2} \mathbb{P}\Big( 
	\big\vert  
	\sumBlockSquares
	-
	\sumBlockSquaresMod
	\big\vert
	> 
	\frac{\varepsilon}{6}
	\Big) \leq	
	\frac{6C(n_2 - n_1 +1)}{\varepsilon} \sqrt{\mathbb{E}[\varepsilon_{0}^2 \mathds{1} \{ \vert \varepsilon_{0} \vert > b_n \}]}.
	\end{align*}
	In the latter inequality, we have used Markov's inequality and 
	$
	\sup_{n \geq 1} \mathbb{E} \vert \sumBlockSquares - \sumBlockSquaresMod \vert
	\leq
	C \sqrt{\mathbb{E}[\varepsilon_{0}^2 \mathds{1} \{ \vert \varepsilon_{0} \vert > b \}]}
	$
	which was demonstrated in the proof of Lemma A.1(b) in \cite{Kim.2011}.
	
	In summary, we have proven that there exists a sequence $b_n$ such that (I) and (III) converge to zero as $n \rightarrow \infty$.
	So, let us complete the proof by showing the convergence of (II).
	As we will see, this is independent of the choice of $b_n$.
	To do so, compute
	\begin{align*}
	\mathbb{P}\Big( 
	\sup_{l \in \mathcal{N}} \big\vert  
	\sumBlockSquaresMod
	-
	c_b\cvar
	\big\vert
	> 
	\frac{\varepsilon}{6}
	\Big)
	&\leq 
	\mathbb{P}\Big( 
	\sup_{l \in \mathcal{N}} \big\vert  
	\sumBlockSquaresMod
	-
	c_b l^{\vert \theta \vert} \Var(\mean{X}{l})
	\big\vert
	> 
	\frac{\varepsilon}{12}
	\Big) \\
	&+ 
	\mathbb{P}\Big( 
	\sup_{l \in \mathcal{N}} \big\vert  
	c_b l^{\vert \theta \vert} \Var(\mean{X}{l})
	-
	c_b\cvar
	\big\vert
	> 
	\frac{\varepsilon}{12}
	\Big).
	\end{align*}
	Clearly, the latter summand converges to zero because $l^{\vert \theta \vert}\Var(\mean{X}{l}) \rightarrow \cvar$ as $l \rightarrow \infty$ by assumption.
	Moreover, by Chebyshev's inequality and $c_b l^{\vert \theta \vert} \Var(\mean{X}{l}) = \mathbb{E}\sumBlockSquaresMod$ we get that
	\begin{align*}
	\mathbb{P}\Big( 
	\sup_{l \in \mathcal{N}} \big\vert  
	\sumBlockSquaresMod
	-
	c_b l^{\vert \theta \vert} \Var(\mean{X}{l})
	\big\vert
	> 
	\frac{\varepsilon}{12}
	\Big)
	\leq 
	\frac{144}{\varepsilon^2}
	\sum_{l = n_1}^{n_2} \Var(\sumBlockSquaresMod).
	\end{align*}
	Recall that $\Var(\mean{X}{l}) = \lvar(l) l^{\theta}$. Now, from the proof of Lemma A.1(b) in \cite{Kim.2011} we know that there exists a constant $C > 0$ such that for any $\xi \in (0, 1)$ 
	\begin{align*}
	\Var(\sumBlockSquaresMod) 
	\leq
	c_b^2 \bigg( 4\xi
	\lvar^{4}(l)
	+ 
	4 \max_{n\xi \leq k \leq n} \Big( l^{\vert \theta \vert}\Cov(\mean{B}{1, l}, \mean{B}{1 + k, l})\Big)^2
	+
	\frac{C}{n - l + 1}
	\bigg).
	\end{align*}
	Consequently, for $n$ large enough there is another constant $C$ such that
	\begin{align}
	\sum_{l = n_1}^{n_2} \Var(\sumBlockSquaresMod) \leq 
	C \bigg( 
	(n_2 - n_1 + 1) \xi
	+ 
	n_2^{2\vert \theta \vert}
	\sum_{l = n_1}^{n_2} \max_{n\xi \leq k \leq n} \Cov(\mean{B}{1, l}, \mean{B}{1 + k, l})^2
	+ 
	\frac{n_2 - n_1 + 1}{n - n_2 + 1}
	\bigg).
	\label{eq: bound sum of variances}
	\end{align}
	Obviously, for $n_1 = n^\delta$ and $n_2 = m n^\delta$ the last fraction in \eqref{eq: bound sum of variances} converges to zero.
	Similarly, the quantity $(n_2 - n_1)\xi$ goes to zero if $\xi = \xi_n$ is chosen such that $\xi_n$ depends on $n$ and converges to zero fast enough.
	Consequently, we only have to consider how the covariance term in \eqref{eq: bound sum of variances} behaves as $n$ goes to infinity.\\
	Recall that $\gamma(k) := \Cov(X(0), X(k)) = L_\gamma(k) \vert k \vert^{-\vert \theta \vert}$  where $L_\gamma$ is a slowly varying function s.t. $L_\gamma(k) \rightarrow \cgamma \neq 0$ as $k \rightarrow \infty$.
	We use this in order to establish that
	\begin{align*}
	\vert \Cov(\mean{B}{1, l}, \mean{B}{1 + k, l}) \vert 
	\leq 
	\frac{1}{l} \sum_{i = -(l - 1)}^{l - 1}
	\bigg( 
	1 - \frac{\vert i \vert}{l}
	\bigg)
	\vert
	\gamma(k + i)
	\vert 
	\leq 
	\frac{2 \vert \cgamma \vert}{l}
	\sum_{i = -(l - 1)}^{l - 1}
	\bigg( 
	1 - \frac{\vert i \vert}{l}
	\bigg)
	\vert
	k + i
	\vert^{-\vert \theta \vert}.
	\end{align*} 
	Next, we need to choose $\delta$ and $\xi_n$ appropriately such that $(n_2 - n_1 + 1) \xi_n \rightarrow 0$ as $n \rightarrow \infty$ and
	\begin{align}
	n^\delta \leq l \leq mn^{\delta} \leq n \xi_n.
	\label{eq: xi_conditios}
	\end{align}
	An appropriate choice will be discussed at the end of this proof.
	Assuming \eqref{eq: xi_conditios}, for every $k \geq n \xi_n$, it holds
	\begin{align*}
	\frac{2 \vert \cgamma \vert}{l}
	\sum_{i = -(l - 1)}^{l - 1}
	\bigg( 
	1 - \frac{\vert i \vert}{l}
	\bigg)
	&\vert
	k + i
	\vert^{-\vert \theta \vert}
	\leq 
	2\vert \cgamma \vert (n\xi_n - l + 1)^{-\vert \theta \vert} 
	\frac{1}{l} \sum_{i = -(l - 1)}^{l - 1}
	\bigg( 
	1 - \frac{\vert i \vert}{l}
	\bigg) \\
	&\sim 
	2 \vert \cgamma \vert (n\xi_n - l + 1)^{-\vert \theta \vert}.
	\end{align*}
	Thus, for the covariance term in \eqref{eq: bound sum of variances} we get
	\begin{align}
	n_2^{2\vert \theta \vert}
	\sum_{l = n_1}^{n_2} \max_{n\xi \leq k \leq n} \Cov(\mean{B}{1, l}, \mean{B}{1 + k, l})^2
	\leq 
	8 n_2^{2\vert \theta \vert} (n_2 - n_1 + 1) 
	\cgamma^2
	(n \xi_n - n_2 + 1)^{-2\vert \theta \vert}
	\label{eq: bound sum_max_covariance}
	\end{align}
	for $n$ large enough.
	Plugging in $n_1 = n^{\delta}$ and $n_2 = m n^{\delta}$, it is easy to see that the RHS of Inequality \eqref{eq: bound sum_max_covariance} can be rewritten as
	\begin{align}
	8m^{2 \vert \theta \vert} \cgamma^2(m - 1) \bigg( \frac{n^{\delta(1 + 1/(2\vert \theta \vert)) }}{n \xi_n - m n^{\delta} + 1} \bigg)^{2 \vert \theta \vert} 
	+ 
	8m^{2 \vert \theta \vert} \cgamma^2 \bigg( \frac{n^{\delta}}{n \xi_n - m n^{\delta} + 1} \bigg)^{2 \vert \theta \vert}.
	\label{eq: sum of convergence}
	\end{align}
	Finally, put $\xi_n = n^{-(\delta + x)}$ where $x > 0$ can be chosen such that the summands in equality \eqref{eq: sum of convergence} converge to zero. 
	After rewriting
	\begin{align*}
	\frac{n^{\delta(1 + 1/(2\vert \theta \vert)) }}{n \xi_n - m n^{\delta}} 
	= 
	\frac{n^{2\delta - 1 + x + \delta / (2\vert \theta \vert)}}{1 - m n^{2\delta - 1 + x}}
	\end{align*}
	it becomes clear that $\delta$ and $x$ need to be chosen such that both
	$
	2\delta - 1 + x  < 0$ and $
	2\delta - 1 + x  + \frac{\delta}{2\vert \theta \vert} < 0 
	$
	are fulfilled.
	Notice that in order for all of these inequalities to be well-defined we need to ensure that $\delta < \frac{2 \vert \theta \vert}{4 \vert \theta \vert + 1}$ but given this choice it is easy to see that $\xi_n$ fulfils condition \eqref{eq: xi_conditios}. 
\end{proof}

Before we can prove \Cref{THM: CONSISTENCY VAR PLOT}, we need a technical lemma first.

\begin{Lemma}
	\label{thm: auxillary bounds}
	Let $n_1 < n_2$ be two positive integers dependent on $n \in \mathbb{N}$ such that $n_2 - n_1 \rightarrow \infty$ and $\frac{n_2}{n_1} \rightarrow m > 1$ as $n \rightarrow \infty$.
	Further, define $N = n_2 - n_1 + 1$ and $\xk = \log(n_1 + k - 1)$ for $k = 1, \ldots, N$. 
	Then, for $n$ large enough there exist constants $C_1, C_2 > 0$ such that
	\begin{align*}
	\sup_{k = 1, \dots, N} \vert \xk - \mean{x}{N}\vert \leq C_1
	\quad 
	\text{ and }
	\quad
	\sum_{k = 1}^{N}(\xk -\mean{x}{N})^2 \geq C_2 N.
	\end{align*}

	\begin{proof}
		The first inequality follows from
		$\vert \xk - \mean{x}{N}\vert \leq \vert x_{1,n} - x_{N,n} \vert = \big\vert \log\big(\frac{n_1}{n_2}\big)\big\vert$ which converges to $\vert \log m \vert$ as $n \rightarrow \infty$.
		Next, let us compute
		\begin{align*}
		&\frac 1 N \sum_{k = 1}^{N}(\xk -\mean{x}{N})^2 
		={} \frac 1 N \sum_{k = 1}^{N}\big(\xk - \log(n_2) -[\mean{x}{N} - \log(n_2)]\big)^2 \\
		&={} \frac 1 {N} \sum_{k = 1}^{N} \log\left(\frac{n_1+k-1}{n_2}\right)^2 - \frac 1 {N} \sum_{k = 1}^{N} \log\left(\frac{n_1+k-1}{n_2}\right) \cdot \frac 1 {N} \sum_{k = 1}^{N} \log\left(\frac{n_1+k-1}{n_2}\right).
		\end{align*}
		By integrability of the functions $x \mapsto \log(x)$ and $x \mapsto x^2$, this implies
		\begin{align*}
		&\frac 1 N \sum_{k = 1}^{N}(\xk -\mean{x}{N})^2 \sim{} \frac{n_2}{N}\int_{n_1/n_2}^1 \log(x)^2 \, \mathrm{d}x - \left[\frac{n_2}{N} \int_{n_1/n_2}^1 \log(x) \, \mathrm{d}x\right]^2 \\
		\sim{}& \frac{m}{m-1}\int_{1/m}^1 \log(x)^2 \, \mathrm{d}x - \left[\frac{m}{m-1} \int_{1/m}^1 \log(x) \, \mathrm{d}x\right]^2 
		={} \Var(\log(U))
		\end{align*}  
		for large $n$, where $U$ denotes a uniform random variable on $[1/m,1]$. Thus, 
		$ \sum_{k = 1}^{N}(\xk -\mean{x}{N})^2 \geq{} C_2N $
		for some constant $C_2>0$.
	\end{proof}
\end{Lemma}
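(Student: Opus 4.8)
The plan is to establish the two deterministic estimates separately, exploiting that the sequence $\xk = \log(n_1 + k - 1)$ is strictly increasing in $k$ and that the rescaled grid points $(n_1 + k - 1)/n_2$ become equidistributed on $[1/m, 1]$ as $n \to \infty$.

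For the first inequality, I would observe that since $\xk$ is increasing in $k$, its range is attained at the endpoints $x_{1, n} = \log n_1$ and $x_{N, n} = \log n_2$, and the average $\mean{x}{N}$ necessarily lies in the interval $[\log n_1, \log n_2]$. Hence every deviation satisfies $\vert \xk - \mean{x}{N} \vert \leq \vert x_{1, n} - x_{N, n} \vert = \log(n_2/n_1)$, which converges to $\log m$ by hypothesis. Any constant $C_1 > \log m$ therefore works for all large $n$.

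For the second inequality, the key step is to recognize the left-hand side as $N$ times the empirical variance of the numbers $\log\big((n_1 + k - 1)/n_2\big)$; subtracting the $k$-independent constant $\log n_2$ from each $\xk$ alters neither the deviations nor the variance. I would thus rewrite
\begin{align*}
\frac{1}{N}\sum_{k = 1}^{N} (\xk - \mean{x}{N})^2
&= \frac{1}{N}\sum_{k = 1}^{N} \log^2\!\left(\frac{n_1 + k - 1}{n_2}\right) \\
&\quad - \left(\frac{1}{N}\sum_{k = 1}^{N} \log\!\left(\frac{n_1 + k - 1}{n_2}\right)\right)^2
\end{align*}
and treat both normalized sums as Riemann sums over the grid $\{(n_1 + k - 1)/n_2\} \subset [n_1/n_2, 1]$ with mesh $1/n_2$.

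The main work lies in the Riemann sum convergence. Since $n_1/n_2 \to 1/m$ and $N/n_2 \to (m - 1)/m$, and since $\log$ and $\log^2$ are continuous (hence Riemann integrable) on the compact interval $[1/m, 1]$, which is bounded away from the singularity of the logarithm, I would conclude that each normalized sum converges to $\frac{m}{m - 1} \int_{1/m}^{1} \log^j(x)\, \mathrm{d}x$ for $j = 1, 2$. Consequently the empirical variance converges to $\Var(\log U)$, where $U$ is uniform on $[1/m, 1]$. Because $m > 1$, the logarithm is non-constant on $[1/m, 1]$, so this limiting variance is strictly positive; therefore the empirical variance exceeds some $C_2 > 0$ for all large $n$, which yields $\sum_{k = 1}^{N} (\xk - \mean{x}{N})^2 \geq C_2 N$. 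The only subtlety I anticipate is justifying the convergence over the slightly moving interval $[n_1/n_2, 1]$, but this is harmless since the grid stays bounded away from $0$, so both integrands remain bounded and uniformly continuous throughout.
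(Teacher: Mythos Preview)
Your proposal is correct and follows essentially the same approach as the paper: both bound the first quantity by the range $\log(n_2/n_1)\to\log m$, and both obtain the second by subtracting $\log n_2$, rewriting the average squared deviation as an empirical variance of $\log((n_1+k-1)/n_2)$, and passing to the Riemann integral to identify the limit as $\Var(\log U)$ with $U$ uniform on $[1/m,1]$. Your write-up even supplies a bit more justification (monotonicity, positivity of the limiting variance, handling of the moving left endpoint) than the paper's own proof.
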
	

\begin{proof}[Proof of Theorem \ref{THM: CONSISTENCY VAR PLOT}:]
	Let $X$ be any stationary time series $X$ whose spectral distribution is absolutely continuous with a spectral density denoted by $f$. From Theorem 2 in Chapter IV§7 of \cite{Gihman.1974} it follows that $X$ admits representation
	$
	X(k) = \sum_{j = 0}^{\infty} a_j \varepsilon_{k - j}
	$
	where $a_j \in \mathbb{R}$, $\sum_{j = -\infty}^{\infty} a_j^2 < \infty$ and $(\varepsilon_j)_{j \in \mathbb{Z}}$ is a sequence of uncorrelated random variables with common mean zero and common variance $\sigma^2 > 0$ iff 
	\begin{align} 
	\label{eq: non-deterministic condition}
	\int_{-\pi}^{\pi} \log f(\lambda)\ \mathrm{d}\lambda > -\infty.
	\end{align}
	Notice that condition \eqref{eq: non-deterministic condition} is fulfilled in our setting because our time series is non-deterministic, cf.~Remark 1 in §5.8 of \cite{Brockwell.1991}.
	Next, let us prove that the innovations $\varepsilon_{j}$, $j \in \mathbb{Z}$, are identically distributed. 
	From Wold's decomposition, c.f.~Thm.~5.7.1 in \cite{Brockwell.1991}, it follows that an innovation $\varepsilon_{j}$ at time point $j \in \mathbb{Z}$ is nothing but the best linear prediction of $X(j)$ by the observed past $\overline{\text{span}} \{X(s), s \leq j - 1 \}$, i.e.
	$
	\varepsilon_{j} = X(j) - \mathcal{P}_{\mathcal{M}_{j - 1}} X(j),
	$
	where $\mathcal{P}_{\mathcal{M}_{j - 1}}$ is the projection operator on $\mathcal{M}_{j - 1} := \overline{\text{span}} \{X(s), s \leq j-1 \}$.  \\
	Notice that this projection can be expressed as a linear combination of $X(s)$, $s \leq j - 1$.
	Therefore, $\varepsilon_{j} = g\big((X(j), X(j - 1), \ldots))\big)$ where $g$ is a measurable function mapping a sequence of $\mathcal{L}^2$-variables to an $\mathcal{L}^2$-variable. By stationarity of $X$ we get that $\varepsilon_{j}$, $j \in \mathbb{Z}$ are identically distributed.   
	Recall that our estimator $\thetaest$ defined in Equation \eqref{eq: slope estimator variance} writes
	\begin{align}
	\thetaest = \frac{\sum_{k = 1}^{N}(\xk - \mean{x}{N})(\yk - \mean{y}{N})}{\sum_{k = 1}^{N}(\xk - \mean{x}{N})^2}
	\end{align}
	with $N = n_2 - n_1 + 1$, $\xk = \log(n_1 + k - 1)$, $\yk = \log S^2_{n_1 + k - 1}$, $k = 1, \dots, N$ where $\mean{x}{N}$ and $\mean{y}{N}$ represent the mean of $\xk$ and $\yk$, $k = 1, \dots, N$, respectively.
	Thus,
	\begin{align*}
	\vert \thetaest - \theta \vert 
	=
	\bigg\vert
	\bigg( 
	\sum_{k = 1}^N (\xk - \mean{x}{N})^2
	\bigg)^{-1}
	\sum_{k = 1}^{N} (\xk - \mean{x}{N}) \big(\yk - \mean{y}{N} - (\xk - \mean{x}{N})\theta\big)
	\bigg\vert.
	\end{align*}
	By triangle inequality and 
	\begin{align*}
	\big\vert\yk - \mean{y}{N} - (\xk - \mean{x}{N})\theta\big\vert 
	\leq
	\big\vert \yk - (\log \cvar - \vert \theta \vert \xk) \big\vert
	+
	\frac{1}{N} \sum_{j = 1}^{N} \big\vert y_{j, n} - (\log \cvar - \vert \theta \vert \xk) \big\vert
	\end{align*}
	it follows that 
	\begin{align*}
	\vert \thetaest - \theta \vert 
	&\leq
	\Big( 
	\sum_{k = 1}^{N}(\xk -\mean{x}{N})^2
	\Big)^{-1}
	\Big( 
	\sum_{k = 1}^{N}
	\vert \xk - \mean{X}{N} (\yk - (\log \cvar - \vert \theta \vert \xk)) \vert
	\\
	&\hspace{4cm} +
	\sum_{k = 1}^{N}
	\vert \xk - \mean{X}{N}\vert
	\frac{1}{N} \sum_{j = 1}^{N} \vert y_{j, n} - (\log \cvar - \vert \theta \vert x_{j, n}) \vert
	\Big)
	\\ 
	&\leq
	2 
	\Big( 
	\sup_{k = 1, \dots, N} \vert \xk - \mean{x}{N}\vert 
	\Big)
	\Big( 
	\sum_{k = 1}^{N}(\xk -\mean{x}{N})^2
	\Big)^{-1}
	\sum_{k = 1}^{N} \big\vert \yk - (\log \cvar - \vert \theta \vert \xk) \big\vert.
	\end{align*}
	Therefore, we can use \Cref{thm: auxillary bounds} to show that there exists a constant $C \in (0, \infty)$ such that
	\begin{align}
	\label{eq: bound difference theta and theta hat}
	\vert \thetaest - \theta \vert \leq C \sup_{k = 1, \dots, N} \big\vert  \yk - (\log \cvar - \vert \theta \vert \xk) \big\vert.
	\end{align}
	By \Cref{thm: supremum convergence}, the supremum in Inequality \eqref{eq: bound difference theta and theta hat} converges to 0 in probability as $n \rightarrow \infty$. 
\end{proof}

\section{Proof of Theorem \ref{THM: TRAFO IS OKAY}}
\label{appendix: trafo okay proof}

As is common with subordinated Gaussian time series, our proofs will rely on Hermite polynomials. These can be found in many textbooks like Chapter 6.3 in \cite{Samorodnitsky.2016}. 

Let $\varphi$ be the density of a standard Gaussian random variable and define 
\begin{align*}
H_n(x) &= (-1)^n e^{x^2/2} \frac{\mathrm{d}^n}{\mathrm{d}x^n} e^{-x^2/2}, \quad x \in \mathbb{R}, \ n \in \mathbb{N}, \\
\mathcal{L}^2(\varphi) &= \bigg\{ f: \mathbb{R} \rightarrow \mathbb{R} \, \bigg\vert \, \int_{\mathbb{R}} f^2(x) \varphi(x) \mathrm{d}x < \infty \bigg\},\\
\langle f, g \rangle_{\mathcal{L}^2(\varphi)} &= \int_{\mathbb{R}} f(x)g(x) \varphi(x) \mathrm{d}x, \quad f, g \in \mathcal{L}^2(\varphi).
\end{align*}
$H_n$ is called the \textit{$n$-th Hermite polynomial}.
These polynomials form an orthonormal basis of $\mathcal{L}^{2}(\varphi)$ which implies that any function $g \in \mathcal{L}^2(\varphi)$ can be uniquely expressed as 
$
g(x) = \sum_{n = 0}^{\infty} a_n(g) H_n(x)
$
in the $\mathcal{L}^{2}(\varphi)$-sense where $a_n(g) = \langle g, H_n \rangle_{\mathcal{L}^2(\varphi)}$ are the \textit{Fourier coefficients of $g$}. 
Finally, the index $k_g := \inf \{k \geq 1 \vert a_k(g) \neq 0 \}$ is called the \textit{Hermite rank of $g$}.

\begin{proof}[Proof of \Cref{THM: TRAFO IS OKAY}]
	Stationarity of $Z_\nu$ follows easily from stationarity of $Y$ since we apply a measurable function to $Y$. 
	Next, assume w.l.o.g.~that $u_1 < \ldots < u_J$ such that $g^{-1}(u_1) < \ldots < g^{-1}(u_J)$ where $g^{-1}$ is the inverse of $g$ restricted to $[0, \infty)$.
	Let $g_\nu(x) = \int_{\mathbb{R}} \mathds{1}\{\vert x \vert > g^{-1}(u)\}\, \nu(\mathrm{d}u)$.
	Then, for $n \in \mathbb{Z}$
	\begin{align*}
	Z_{\nu}(n + 1) 
	=
	g_\nu(\vert Y(n + 1) \vert)
	=
	\begin{cases}
	0, &\vert Y(n + 1) \vert \leq g^{-1}(u_1)\\
	w_1, &g^{-1}(u_1) < \vert Y(n + 1) \vert \leq g^{-1}(u_2)\\
	w_1 + w_2, &g^{-1}(u_2) < \vert Y(n + 1) \vert \leq g^{-1}(u_3)\\
	\vdots
	\end{cases} 
	\end{align*}
	Thus, $Z_{\nu}(n + 1)$ is a non-trivial function of $\vert Y(n + 1) \vert$ as long as $g^{-1}(u_1), \ldots, g^{-1}(u_J) \in \text{Im}(Y) = (0, \infty)$.
	This is fulfilled by the assumptions on $g$ and $g^{-1}$.
	Since $Y$ is non-deterministic and Gaussian, Wold's decomposition gives us $Y(k) = \sum_{j \geq 0} a_j \varepsilon_{k - j}$, $k \in \mathbb{Z}$, where $(\varepsilon_{j})_{j \in \mathbb{Z}}$ is a sequence of iid.~standard normal random variables.
	Thus, $Z_{\nu}(n + 1)$ is a non-trivial function of $\varepsilon_{n + 1}$ as well.
	Therefore, $Z_{\nu}(n + 1)$ is not measurable w.r.t.~$\mathcal{F}_{n} := \sigma(\varepsilon_{n}, \varepsilon_{n - 1}, \ldots)$.
	
	If $Z_{\nu}$ were deterministic then
	\begin{align*}
	Z_{\nu}(n + 1)
	= 
	\sum_{j = 1}^{J} w_j \mathds{1}\big\{ 
	\vert Y(n + 1) \vert > g^{-1}(u_j)
	\big\}
	=
	\sum_{j = 1}^{J} w_j \sum_{k \leq n} c_{k, j} 
	\mathds{1}\big\{ 
	\vert Y(k) \vert > g^{-1}(u_j)
	\big\},
	\end{align*}
	where $c_{k, j} \in \mathbb{R}$ are the coefficients of the projection of $Z_{\delta_{u_j}}(n + 1)$ onto its past.
	The RHS is measurable w.r.t.~$\mathcal{F}_{n}$ which is a contradiction.
	Hence, $Z_{\nu}$ is non-deterministic.
	
	Clearly, $g_{\nu} \in \mathcal{L}^{2}(\varphi)$ such that there exists a representation $g_{\nu}(x) = \sum_{k = 0}^{\infty} a_{k, \nu} H_{k}(x)$ based on Hermite polynomials where $a_{k, \nu} = \langle g_{\nu}, H_k \rangle_{\mathcal{L}^2(\varphi)}$.
	Since $g_{\nu}$ is not a constant function, it has finite Hermite rank.
	Therefore, we can apply Theorem 6.3.5 from \cite{Samorodnitsky.2016}. 
	Thus, the covariance function of $Z_\nu$ is also regularly varying and fulfills condition \eqref{eq: slowly varying convergence assumption}.
	
	Since the spectral density $f_Y$ of $Y$ exists, we can now prove the existence of the spectral density of $Z_\nu$ by Theorem 6.3.4 from \cite{Samorodnitsky.2016}. 
	This theorem implies that the spectral distribution $F_{Z_\nu}$ of $Z_\nu$ is given by
	\begin{align*}
	F_{Z_\nu} = \sum_{m = 1}^{\infty} \frac{a_m^2(g)}{m!} F_{Y}^{\ast, m, f},
	\end{align*}
	where $F_{Y}$ is the spectral distribution of $Y$ and $F_{Y}^{\ast, m, f}$ is its $m$-th folded convolution, i.e.
	\begin{align*}
	F_{Y}^{\ast, m, f}(A) = F_{Y} \times \ldots \times F_{Y} \Big(
	\{ 
	(x_1, \ldots, x_m) \in (-\pi, \pi]^m : x_1 + \ldots + x_m \in A\mod 2\pi
	\}
	\Big),
	\end{align*}
	for all Borel subsets $A$ of $(-\pi, \pi]$. 
	
	Since $Y$ has a spectral density $f_Y$, the $m$-th folded convolution $F_Y^{\ast, m, f}$ of the spectral distribution $F_Y$ fulfills
	\begin{align*}
	F_Y^{\ast, m, f}(A) 
	= 
	\int_{-\pi}^{\pi} \cdots \int_{-\pi}^{\pi} 
	\mathds{1}\Big\{
	x_1 + \ldots x_m \in A \mod 2\pi
	\Big\}
	\bigg(
	\prod_{i = 1}^{m} f_Y(x_i)
	\bigg)\
	\mathrm{d}x_1 \ldots \mathrm{d}x_m.
	\end{align*}
	Thus, $F_Y^{\ast, m, f}(A)$ is absolutely continuous w.r.t. to the $m$-dimensional Lebesgue measure for every $m \in \mathbb{N}$. 
	Using the translation invariance of the Lebesgue measure and Fubini's theorem it is easy to show that $$\big\{ 
	(x_1, \ldots, x_m) \in (-\pi, \pi]^m : x_1 + \ldots + x_m \in A\mod 2\pi
	\big\}$$ has an $m$-dimensional Lebesgue null measure if $A$ is a one-dimensional null set.
	Consequently, $F_Y^{\ast, m, f}(A) = 0$ for any Lebesgue null set $A$ and for all $m \in \mathbb{N}$.
	By Theorem 6.3.4 from \cite{Samorodnitsky.2016}, the same holds true for the spectral distribution $F_{Z_\nu}$ of $Z_\nu$.
	Thus, $F_{Z_\nu}$ has a density w.r.t. to the Lebesgue measure which means that $Z_\nu$ has a spectral density.
	$\blacksquare$
\end{proof}

\end{document}